\DeclarePairedDelimiter\abs{\lvert}{\rvert}
\DeclarePairedDelimiter\norm{\lVert}{\rVert}
\DeclareMathOperator{\diag}{diag}
\DeclareMathOperator{\ric}{Ric}
\DeclareMathOperator{\tr}{tr}
\theoremstyle{plain}
\newtheorem{thm}{Theorem}[section]
\newtheorem{lem}[thm]{Lemma}
\title{Computer-assisted construction of $SU(2)$-invariant negative Einstein metrics}
\author{Qiu Shi Wang}
\address{Mathematical Institute\\ University of Oxford \\
	Oxford\\
	OX2 6GG\\
	United Kingdom} \email{wangqs@maths.ox.ac.uk, qiu.s.wang@mail.mcgill.ca}
\date{29 April 2026}
\thanks{The author is supported by the Engineering and Physical Sciences Research Council [grant number EP/W524311/1] and the Fonds de recherche du Québec -- Nature et technologies [numéro de dossier 331732].}
\begin{document}
	\maketitle
		
\begin{abstract}
	We construct a 2-parameter family of new triaxial $SU(2)$-invariant complete negative Einstein metrics on the complex line bundle $\mathcal{O}(-4)$ over $\mathbb{C}P^1$. The metrics are conformally compact and neither Kähler nor self-dual. The proof involves using rigorous numerics to produce an approximate Einstein metric to high precision in a bounded region containing the singular orbit or ``bolt'', which is then perturbed to a genuine Einstein metric using fixed-point methods. At the boundary of this region, the latter metric is sufficiently close to hyperbolic space for us to show that it indeed extends to a complete, asymptotically hyperbolic Einstein metric.
\end{abstract}

\section{Introduction and preliminaries}

\subsection{Introduction}

An Einstein manifold is a smooth manifold $M$ equipped with a Riemannian metric $g$ satisfying
\begin{equation}\label{einstein}
	\ric g = \Lambda g
\end{equation}
for some real constant $\Lambda$. In local coordinates, (\ref{einstein}) is a second-order quasilinear PDE for which it is difficult to produce or classify solutions in general. Nonetheless, by requiring that $g$ possess certain symmetries, we can reduce the number of independent variables in (\ref{einstein}) and consequently substantially reduce its complexity. For instance, if $(M, g)$ is a homogeneous space, i.e. $g$ is invariant under the transitive action of a Lie group $G$, then the Einstein equation becomes a tractable algebraic condition \cite{WZ86}. There is a complete classification of homogeneous Einstein manifolds of dimension 4 \cite{J69}.

Less restrictively, one can assume that $G$ acts on $(M,g)$ with generic ``principal orbits'' of codimension one which together cover a dense subset of $M$. In this case, $g$ is said to be of cohomogeneity one and (\ref{einstein}) becomes a system of ODEs in a coordinate normal to the principal orbits. This system has been extensively studied and many new, often explicit, Einstein metrics have been found, for instance in \cite{BB82, DW00, EW00}. In particular, several papers, for instance \cite{B98, BH26, NW25}, have considered $(O(d_1+1)\times O(d_2+1))$-invariant doubly warped product cohomogeneity one metrics on the $(d_1+d_2+1)$-dimensional sphere. 

An intriguing feature of the work \cite{BH26} by Buttsworth--Hodgkinson is that it uses a somewhat unconventional, computer-assisted method. More precisely, the authors construct a non-round Einstein metric on $S^{12}$ in the following way. First, they obtain a heuristic solution up to high accuracy, which they approximate rigorously using Chebyshev polynomials in order to obtain a rigorous upper bound on the \textit{a posteriori} error in the Einstein equation. Then, they linearise the system around this Chebyshev-interpolated approximate solution and use fixed-point theory to show that there is a true solution ``nearby'' if the \textit{a posteriori} error is sufficiently small.

Unlike in the homogeneous case, we do not yet have a classification of cohomogeneity one Einstein 4-manifolds. In this work, we focus on the most interesting special case of the latter, namely $SU(2)$-invariant 4-dimensional cohomogeneity one metrics. They are less symmetric than $(O(d_1+1)\times O(d_2+1))$-invariant metrics, as they depend on 3 functions rather than 2, since the space of left-invariant metrics on $SU(2)$, equivalently the space of scalar products on the Lie algebra $\mathfrak{su}(2)$, is 3-dimensional after diagonalisation\footnote{In fact, the metric is constrained by the Einstein equation to remain diagonal \cite{D09}, so there are 3 functions rather than 6.}. The topology is also more complex than that of a doubly warped product. Einstein metrics of this form are relevant in physics, where they are known as ``Bianchi IX'' solutions \cite{GP79}, in reference to the Bianchi classification of real 3-dimensional Lie algebras. The $SU(2)$-invariant Einstein equation has been solved under a variety of additional symmetry assumptions, detailed in \S \ref{subsystemsubsection}. A summary of known examples of Bianchi IX metrics can be found in Table \ref{examplestable}.

The objective of the present work is to use the numerical analysis and computational methods of \cite{BH26} to construct new triaxial $SU(2)$-invariant complete negative Einstein metrics. The constructed metrics are asymptotically hyperbolic, of non-constant curvature, and the total space is the bundle $\mathcal{O}(-4)$ over $\mathbb{C}P^1$, although similar techniques can most likely be used to construct similar metrics on $\mathcal{O}(-1)$ or $\mathcal{O}(-2)$. Broadly speaking, our proof goes in two steps, both of which will involve some computer assistance : constructing a solution starting from the singular orbit (i.e. the zero section of the bundle $\mathcal{O}(-k)$ in question)(\S\ref{substep1}), and extending it to infinity (\S\ref{substep2}). Our numerical methods only allow us to analyse a particular, arbitrarily chosen metric in a conjectured 2-parameter family of Einstein metrics (see Figure \ref{b1hplot}). Nonetheless, by the continuous nature of our arguments, we in fact obtain a small 2-parameter region of (distinct even up to diffeomorphism, see \S\ref{extensionsubsection}) metrics in a small open neighbourhood of the abovementioned metric in moduli space.

\subsubsection{Solution starting from the singular orbit} \label{substep1}

The cohomogeneity one Einstein equation, considered as an initial value problem in the radial geodesic coordinate $t$, is well-posed and admits a solution in a neighbourhood of the singular orbit which is unique up to a finite number of parameters \cite{VZ24} (in the $SU(2)$-invariant case, 2 parameters). However, we would like to prove the existence of such a solution up to a quantified, positive time $t_f$, a result unavailable from standard theory. To do so, we first linearise the Einstein equation (suitably rewritten as a first-order singular initial value problem in the sense of \cite[Theorem 4.7]{FH17}) around a approximate solution which is ``nearly Einstein'' in the sense that the \textit{a posteriori} differential equation error is small. Practically, this approximate solution is obtained by fitting a high-order sum of Chebyshev polynomials to a high-precision heuristic numerical power series solution. From the approximate solution, we obtain using interval arithmetic rigorous bounds on the terms of the Einstein equation, allowing us to use the fixed point theorem \ref{fixedpointtheorem} to construct a smooth metric up to time $t_f$ with good control on its $C^0$ norm. This procedure broadly follows \cite{BH26}.

\subsubsection{Extending the metric to infinity} \label{substep2}

In order to perform numerics near infinity, we rescale the problem to a compact non-geodesic coordinate $r\in [0,1)$, with infinity at $r=1$. A key observation is that our new negative Einstein metrics are \textit{conformally compact} \cite{CG22}, so we work in a choice of ``conformally compactified'' variables. Then, we use a (computer-assisted) Grönwall's inequality argument to show that Einstein metrics which are ``close enough'' to hyperbolic space must tend towards it at infinity, and in particular must be complete.

\bigskip

Finally, we fix a particular choice of initial data and $t_f$, and produce a specific approximate solution which is ``close enough'' to bridge the gap between the arguments of \S\ref{substep1} and \S\ref{substep2}, using the specific values in appendix \ref{valuesapp}. This leads to the main result of this work, Theorem \ref{maintheorem}.

\textit{Remark.} In a sense, our work fulfils part of the hope of Buttsworth--Hodgkinson that their methods be applied to construct solutions to more general geometric equations. Moreover, given the highly procedural nature of the analytical approach and the great flexibility of computational tools and methods, we believe that the potential of this computer-assisted strategy is far from exhausted. In particular, it would be fruitful to apply the strategy to other cohomogeneity one special curvature conditions.

\subsection{Setup and notation}

Consider a smooth 4-manifold $M$ equipped with a Riemannian metric $g$ of cohomogeneity one under the action of $SU(2)$. Then, if $g$ is Einstein, it can be written as
\begin{equation}\label{b9}
	g=dt^2 + a(t)^2 \sigma_1^2 + b(t)^2\sigma_2^2 + c(t)^2\sigma_3^2,
\end{equation}
where $\sigma_1, \sigma_2,\sigma_3$ are $SU(2)$-invariant 1-forms on the principal orbits satisfying $d\sigma_1 = \sigma_2\wedge \sigma_3$, etc., and $t$ is a coordinate parametrising a unit speed geodesic normal to the principal orbits. The fact that (\ref{b9}) can be written in diagonal form follows from \cite{D09}. The group action can also be $SO(3)$ and the principal orbit can be $SU(2)$ or certain finite quotients thereof.

Let the dot denote the derivative $d/dt$. In the directions tangential to the principal orbits, the Einstein equation (\ref{einstein}) for the above metric (\ref{b9}) is
\begin{equation*}
	r_t - \dot L - (\tr L)L = \Lambda\; \mathrm{id},
\end{equation*}
where $r_t$ is the Ricci endomorphism of the principal orbits, and 
\begin{equation*}
	L=\diag \left( \frac{\dot a}{a}, \frac{\dot b}{b}, \frac{\dot c}{c}\right)
\end{equation*}
is their shape operator. In terms of $a,b,c$, the tangential Einstein equation is written as \cite{DS94} 
\begin{equation}\label{tangential}
	\frac{d}{dt}\left(\frac{\dot a}{a}\right) = -\frac{\dot a}{a}\left(\frac{\dot a}{a} + \frac{\dot b}{b}+ \frac{\dot c}{c}\right) + \frac{a^4 - (b^2-c^2)^2}{2a^2b^2c^2} - \Lambda,
\end{equation}
along with the equations obtained by cyclically exchanging $a,b,c$. In the radial ($\partial/\partial t$) direction, the Einstein equation is
\begin{equation*}
	-\tr (\dot L) + \tr(L^2) = \Lambda.
\end{equation*}
Taking the trace of the tangential equation and using the radial equation yields the conservation law
\begin{equation*}
	S + \tr (L^2) - (\tr L)^2 = 2\Lambda,
\end{equation*}
where $S$ is the scalar curvature of the principal orbits. In terms of $a,b,c$, it takes the form
\begin{equation}\label{conservation}
	\frac{-a^4 - b^4 - c^4 + 2a^2b^2 + 2b^2c^2 + 2c^2a^2}{2(abc)^2} = 2\left( \frac{\dot a \dot b}{ab} + \frac{\dot b \dot c}{bc} + \frac{\dot c \dot a}{ca}\right) + 2\Lambda.
\end{equation}
If the tangential equations are satisfied, then it suffices to check that the radial equation, or equivalently the conservation law, holds at one time to show that it holds at all other times \cite{EW00}. It is also worth noting that the Einstein ODEs are invariant under any sign change $(a,b,c)\rightarrow(\pm a, \pm b, \pm c)$, and thus we may assume that $a,b,c\geq 0$.

The ODEs (\ref{tangential}) form a singular initial value problem at the singular orbit(s) of the manifold. Suppose throughout the remainder of the paper that $M$ is complete. 

\begin{itemize}
	\item For $\Lambda>0$, by the Bonnet--Myers theorem, $M$ is compact and consequently has two singular orbits.
	\item When $\Lambda = 0$, there is exactly one singular orbit, for the following reasons. There at least one, because otherwise, by the Cheeger--Gromoll splitting theorem, $M$ is isometric to a product $N\times \mathbb{R}$. There is at most one, as otherwise $M$ is compact, all Killing vector fields on $M$ are parallel, which leads to a contradiction if one considers the Killing field generated by the cohomogeneity one action. 
	\item When $\Lambda<0$, As compact negative Einstein manifolds admit no nonzero Killing vector fields, there can be either 0 or 1 singular orbits. While complete metrics with no singular orbits, i.e. solutions existing for all $t\in \mathbb{R}$, may exist, we will restrict ourselves in the present work to metrics with at least one singular orbit.
\end{itemize}

In addition, the topology of the singular and principal orbits imposes boundary conditions required for smooth extension of the metric to the singular orbit. More specifically, there are four possible boundary conditions at the singular orbit for $g$ to be a smooth metric. In each case, the space of smooth Einstein metrics defined on a neighbourhood of the singular orbit, considered as formal power series for $a$, $b$ and $c$, is two-dimensional and can be parametrised by some (non-canonical choice of a) pair of boundary conditions. Here, the nuts and bolts terminology follows \cite{GP79}, and we will assume that the singular orbit in question is at $t=0$.

\begin{itemize}
	\item \textbf{``Nut'' case : } We have a one-point singular orbit, principal orbits homeomorphic to $S^3$ and the initial data $a(0) = b(0) = c(0) = 0$ and $\dot a(0) = \dot b(0) = \dot c(0) = \frac{1}{2}$. The global topology of the manifold is then $\mathbb{R}^4$. The space of local Einstein metrics, i.e. Einstein metrics defined in a neighbourhood of $t=0$, can be parametrised by $a^{(3)}(0)$ and $b^{(3)}(0)$.
	\item \textbf{``Bolt'' cases : } For $k=1, 2, 4$, up to double covers, we have an $S^2$ singular orbit and principal orbits of the form $S^3/\mathbb{Z}_k$. The total space is consequently the complex line bundle $\mathcal{O}(-k)$ over $\mathbb{C}P^1$. For the initial data, we may assume without loss of generality that $a(0)=0$ but $b(0), c(0)\neq 0$. Then, it follows that $b(0)=c(0)=h>0$ and we have one of the following cases.
	\begin{itemize}
		\item $\mathcal{O}(-4)$ : $\dot a(0) = 2$. The space of local Einstein metrics is parametrised by $h$ and $\dot b(0)$.
		\item $\mathcal{O}(-2)$ : $\dot a(0) = 1$. The parameters are $h$ and $\ddot b(0)$.
		\item $\mathcal{O}(-1)$ : $\dot a(0) = \frac{1}{2}$. The parameters are $h$ and $b^{(4)}(0)$.
	\end{itemize}
	The parameter $h$ can be interpreted as the size of the $S^2$ bolt.
\end{itemize}

Details and derivations for the above boundary conditions and free parameters can be found in appendix \ref{boundaryappendix}. Examples of (noncompact) Einstein metrics for each topology are given in Table \ref{examplestable}.

\subsection{Known examples and results for subsystems}\label{subsystemsubsection}

The $SU(2)$-invariant Einstein equations (\ref{tangential}), (\ref{conservation}) have been fully solved under a variety of different additional symmetry or special curvature conditions. This includes
\begin{itemize}
	\item The Kähler condition \cite{DS94}.
	\item The (anti-)self-dual condition \cite{AH88, H95}.
	\item An additional $U(1)$ symmetry, i.e. $g$ is $U(2)$-invariant. The equations under this hypothesis, given by (\ref{tangential}), (\ref{conservation}) with $b=c$, are fully integrable \cite{BB82}. Their solutions are called \textit{biaxial metrics}. If $a,b,c$ are distinct, the metric is said to be \textit{triaxial}.
\end{itemize}
\begin{table}[h!]
	\begin{tabular}{|c|c|c|c|c|c|}
		\hline
		\multicolumn{2}{|c|}{positive} & \multicolumn{2}{c|}{Ricci-flat} & \multicolumn{2}{c|}{negative}  \\ \hline
		metric & topology & metric & topology & metric & topology \\ \hline
		round $g_{S^4}$ & $S^4$ & Taub--NUT \cite{NTU63} & $\mathbb{R}^4$ & Hitchin* \cite{H95} & $\mathbb{R}^4$\\
		$g_{S^2}\times g_{S^2}$ & $S^2\times S^2$ & Atiyah--Hitchin* \cite{AH88} & $\mathcal{O}(-4)$ & Pedersen \cite{P85} & $\mathcal{O}(-4)$ \\
		Fubini--Study & $\mathbb{C}P^2$ & Eguchi--Hanson \cite{EH79} & $\mathcal{O}(-2)$ & {\footnotesize Dancer--Strachan* \cite{DS94}} & $\mathcal{O}(-2)$ \\
		Page \cite{P78} & $\mathbb{C}P^2\# \overline{\mathbb{C}P^2}$ & Taub-bolt \cite{P78a} & $\mathcal{O}(-1)$ & & \\ \hline
	\end{tabular}
	\caption{List of known $SU(2)$-invariant Einstein metrics. Triaxial metrics are indicated by a star *. The author conjectures, based on numerical evidence, that no additional positive or Ricci-flat examples exist (consistently with \cite[Conjecture 1.5]{D25} in the positive case).}
	\label{examplestable}
\end{table}

\subsection{Summary of numerical results for negative Einstein metrics}
We can obtain heuristic numerics for the Einstein equation (\ref{tangential}) starting from the singular orbit by starting a Runge--Kutta solver at a very small time, using the first few terms of the formal power series solutions in appendix \ref{boundaryappendix} as approximate initial data. This is repeated along a grid of points in the parameter plane of local Einstein metrics. Numerical results for the $\dot a(0) = 2$ case, which will be the focus of the remainder of the paper, are shown in Figure \ref{b1hplot}.

We find that for each of the four singular orbit topologies there appears to be a 2-parameter family of complete negative Einstein metrics. Generically, these metrics are all conformally compact, with conformal infinity given by (the conformal class of) an $SU(2)$-invariant metric on $S^3$ or some finite quotient thereof. They are also generically triaxial, with a 1-dimensional biaxial subfamily. In some sense, the metrics with bolt singular orbits are generalisations of the Graham--Lee fillings \cite{GL91} but with nontrivial internal topology. We numerically assess whether the metrics are Kähler using criteria in \cite{DS94}, and whether they are self-dual using criteria in \cite{CGLP03}. 

More specifically, we observe numerically that there are negative Einstein metrics as follows.

\begin{itemize}
	\item For a point singular orbit (nut, $M=\mathbb{R}^4$), there is a 2-parameter family of anti-self-dual metrics which appears in Hitchin's work \cite{H95}. This includes hyperbolic space and the pseudo-Fubini--Study metric \cite{P85}.
	\item When the singular orbit is $S^2$ and the principal orbit is $SO(3)/\mathbb{Z}_2$ (bolt, $M=\mathcal{O}(-4)$), we see a 2-parameter family of complete solutions, out of which there is a (biaxial) Kähler metric which appears in a work of H. Pedersen \cite{P85}. The rest of the triaxial metrics (see Figure \ref{b1hplot}), all non-Kähler and non self-dual, appear to be new.
	\item When the singular orbit is $S^2$ and the principal orbit is $SO(3)$ (bolt, $M=\mathcal{O}(-2)$), we see a 2-parameter family of non-self-dual complete metrics, out of which there is a 1-parameter family of Kähler metrics due to Dancer--Strachan \cite{DS94}.
	\item When the singular orbit is $S^2$ and the principal orbit is $SU(2)$ (bolt, $M=\mathcal{O}(-1)$), we see a 2-parameter family of complete metrics which are not self-dual nor Kähler.
\end{itemize}

\textit{Remarks.} Numerical evidence for the existence of these two-parameter families of metrics has been independently found in the recent work \cite{D25}. The author of the latter work also proves their existence in an open neighbourhood of the 1-dimensional subset of parameter space corresponding to $U(2)$-invariant metrics using perturbative methods. However, the procedure in the present work allows the construction of metrics that are genuinely triaxial, in the sense of being ``arbitrarily far away'' from being biaxial/$U(2)$-invariant. 

Our proof method, however, is limited to constructing metrics in an infinitesimal neighbourhood of any particular point in the shaded region of Figure \ref{b1hplot}. Consequentially, we do not obtain any information as to what happens at the boundaries of the region, nor where that boundary lies. These questions warrant further investigation using different methods.

\subsection{Plan of the paper}

\begin{figure}
	\includegraphics[width = .65\textwidth]{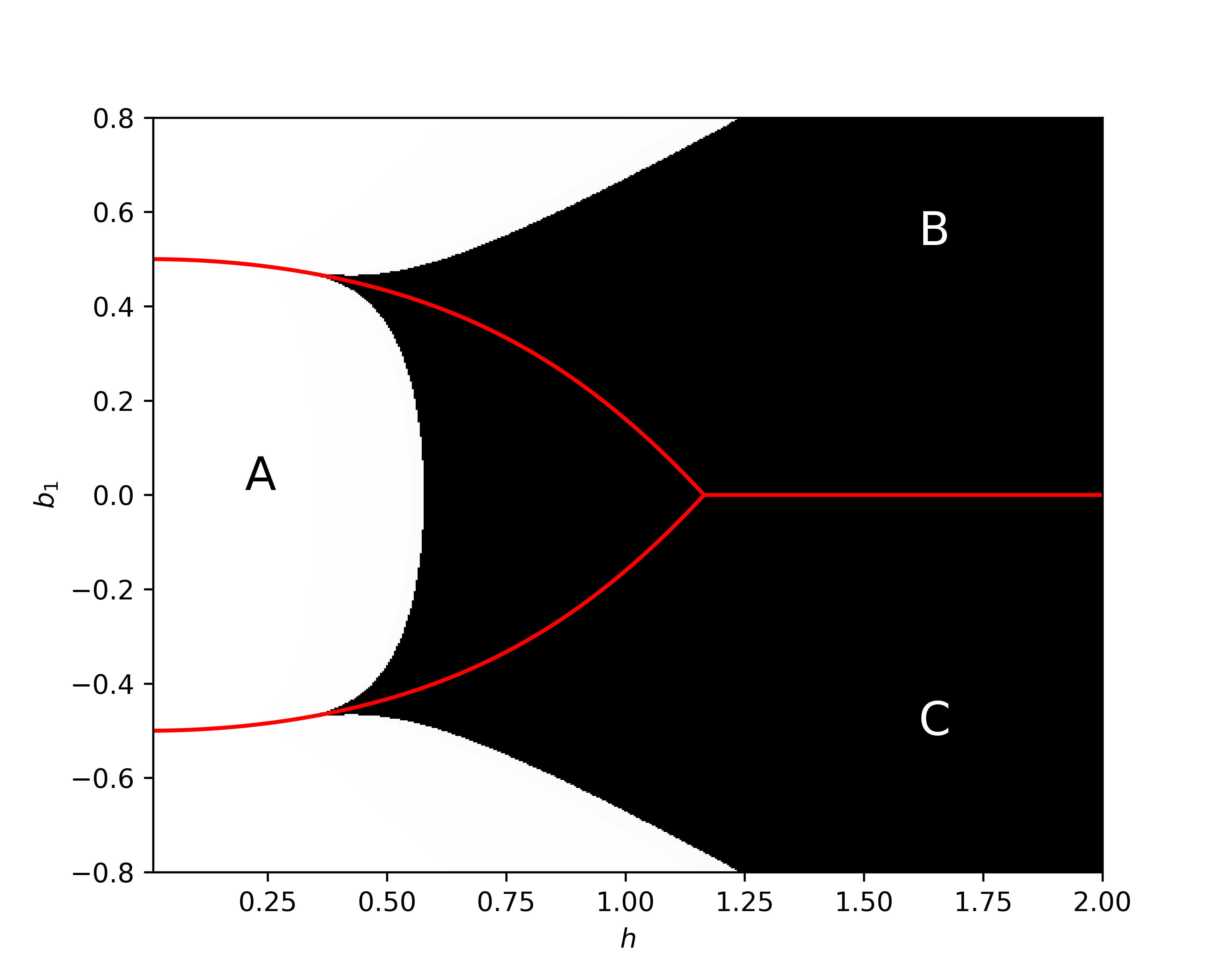}
	\caption{Numerical evidence for the existence of complete negative Einstein metrics in the parameter plane $(h,b_1)$ for an $\dot a(0)=2$ bolt, corresponding to the total space $M=\mathcal{O}(-4)$. The dark shaded region corresponds to likely complete Einstein metrics. The red curves divide the half-plane into three regions $A$, $B$ and $C$, in which $a, b$ and $c$ are respectively the largest warping functions as one approaches either conformal infinity (shaded region) or a finite time singularity (unshaded region). Note that there is a symmetry $b_1\leftrightarrow -b_1$, $b\leftrightarrow c$, and the horizontal axis $b_1=0$ corresponds to biaxial metrics with $b\equiv c$.}
	\label{b1hplot}
\end{figure}

We will use a two-pronged computer-assisted scheme to construct negative Einstein metrics whose existence is numerically evidenced by Figure \ref{b1hplot}. First, noting that these metrics appear to be asymptotically hyperbolic, we will study them near infinity using the notion of conformally compact metrics, introduced in \S\ref{confcompactsection}. Next, in \S\ref{infinitysection}, we will use a Grönwall's inequality-type estimate to show in Lemma \ref{infinitylemma} that ``sufficiently nice'' negative Einstein metrics near infinity can be extended to complete metrics.

At this stage, we specialise to the $k=4$, or equivalently $\dot a(0)=2$ case, and detail the singular initial value problem corresponding to the Einstein equation in \S\ref{ivpsection}. We then use in \S\ref{fixedpointsection} a fixed-point argument along the lines of \cite{BH26} to prove the existence of a true Einstein metric satisfying hypotheses (\ref{inf2})-(\ref{inf4}) of Lemma \ref{infinitylemma} up until some \textit{a priori} determined time $t_f$, given a ``sufficiently nearby'' approximate solution in the sense of the hypotheses of Lemma \ref{existencelemma}.

Finally, in \S\ref{computersection}, we describe the computer-assisted construction of the approximate solution, and how it leads to the main result of the paper, Theorem \ref{maintheorem}.

We consider only the $\mathcal{O}(-4)$ case for simplicity and for illustrative purposes, but a similar procedure can most likely be used to construct new triaxial metrics on $\mathcal{O}(-2)$ or $\mathcal{O}(-1)$. Throughout the construction, the Einstein constant is fixed to be $\Lambda = -3$.

\section{Negative Einstein metrics in the conformally compact formalism}\label{confcompactsection}

The negative Einstein metrics we would like to construct are noncompact, so there is \textit{a priori} no straightforward way of proving completeness using the ODEs in the unit-speed radial geodesic coordinate $t$, which goes to infinity. However, an important observation is that the metrics are conformally compact, which allows us to formulate an initial value problem at infinity. In this section, we recall the relevant definitions and reformulate the equations (\ref{tangential}), (\ref{conservation}).

Let $M$ be a manifold with boundary. A metric $g$ on the interior of $M$ is said to be \textit{conformally compact} if there is a smooth \textit{defining function} $\rho$ such that $\bar g = \rho^2 g$ extends smoothly to the boundary $\partial M$. A defining function $\rho$ satisfies $\rho>0$ in the interior of $M$, $\rho=0$ on $\partial M$, and $D\rho \neq 0$ on $\partial M$ (i.e. its gradient is nonvanishing on the boundary). The conformal class of the restriction of $\bar g$ to $\partial M$ is said to be the \textit{conformal infinity} of the conformally compact manifold $(M,g)$.

The metric $g$ is said to be \textit{conformally compact Einstein} if in addition $\ric g = -(n-1)g$, where $n=\dim M$. There is a considerable amount of work on conformally compact Einstein manifolds in the literature; see for instance the recent review \cite{CG22}.

In the ($\Lambda = -3$) Bianchi IX case, we make the radial change of coordinates $r=\tanh(t/2)$ and consider the defining function $\rho = \frac{1}{2}(1-r^2)$. Note that by doing so, we compactify the manifold by adding a copy of a principal orbit $SU(2)/\mathbb{Z}_k$ at $r=1$, which corresponds to $t=\infty$. Indeed, let 
\begin{equation*}
	\alpha = \rho a, \qquad \beta = \rho b, \qquad \gamma = \rho c
\end{equation*}
be the defining functions of $\bar g = \rho^2 g = dr^2 + \alpha^2 \sigma_1^2 + \beta^2 \sigma_2^2 + \gamma^2 \sigma_3^2$. Let $'$ denote $d/dr$. Then, the tangential ODEs become
\begin{equation}\label{tangentialconformal}
	\left(\frac{\alpha'}{\alpha}\right)' = -\frac{\alpha'}{\alpha}\left(\frac{\alpha'}{\alpha} + \frac{\beta'}{\beta} + \frac{\gamma'}{\gamma}\right) + \frac{\alpha^4 - (\beta^2-\gamma^2)^2}{2\alpha^2\beta^2\gamma^2} + \frac{1}{\rho}\left( 5 - r\left(\frac{3\alpha'}{\alpha}+ \frac{\beta'}{\beta} + \frac{\gamma'}{\gamma}\right)\right)
\end{equation}
along with the cyclic permutations of $\alpha, \beta, \gamma$. Using the fact that 
\begin{equation}\label{confcompacttransform}
	\frac{\dot a}{a} = \rho\left(\frac{\alpha'}{\alpha} + \frac{r}{\rho}\right),
\end{equation}
we obtain that the conservation law (\ref{conservation}) takes the form
\begin{multline}\label{conservationr}
	\frac{-\alpha^4 - \beta^4 - \gamma^4 + 2\alpha^2\beta^2 + 2\beta^2\gamma^2 + 2\gamma^2\alpha^2}{2(\alpha\beta\gamma)^2} =\\
	2\left(\left(\frac{\alpha'}{\alpha} + \frac{r}{\rho}\right)\left(\frac{\beta'}{\beta} + \frac{r}{\rho}\right) + \left(\frac{\beta'}{\beta} + \frac{r}{\rho}\right)\left(\frac{\gamma'}{\gamma} + \frac{r}{\rho}\right) +\left(\frac{\gamma'}{\gamma} + \frac{r}{\rho}\right) \left(\frac{\alpha'}{\alpha} + \frac{r}{\rho}\right)\right) - \frac{6}{\rho^2}.
\end{multline}
Complete, conformally compact Einstein metrics correspond to solutions of (\ref{tangentialconformal}) on $r\in [0,1]$ satisfying the above conservation law.

\textit{Remark.} A complete Einstein metric as above has conformal infinity given by the class of a homogeneous metric on $\partial M = SU(2) / \mathbb{Z}_k$, $k=1, 2, 4$, parametrised by $(\alpha(1), \beta(1), \gamma(1))$.

\section{Proof of existence : asymptotic hyperbolicity near infinity}\label{infinitysection}

The solutions to (\ref{tangential}) for $\Lambda<0$ are ``stable at hyperbolic infinity'' in the sense that if an Einstein metric is sufficiently close to the hyperbolic metric, then it extends to infinity in a way that approaches hyperbolic space. In this section, we will make this notion precise in Lemma \ref{infinitylemma} using a Grönwall's inequality argument.

In the coordinate $s\equiv 1-r$, the tangential equations (\ref{tangentialconformal}) become, with $'$ now denoting $d/ds$ instead of $d/dr$, 
\begin{equation*}
	\left(\frac{\alpha'}{\alpha}\right)' = -\frac{\alpha'}{\alpha}\left(\frac{\alpha'}{\alpha} + \frac{\beta'}{\beta} + \frac{\gamma'}{\gamma}\right) + \frac{\alpha^4 - (\beta^2 - \gamma^2)^2}{2\alpha^2\beta^2\gamma^2} + \frac{1}{\rho}\left(5 + (1-s)\left(\frac{3\alpha'}{\alpha} + \frac{\beta'}{\beta} + \frac{\gamma'}{\gamma}\right)\right)
\end{equation*}
and cyclically. The form of the equations requires that solutions which extend to $s=0$ satisfy
\begin{equation*}
	\frac{\alpha'(0)}{\alpha(0)} = \frac{\beta'(0)}{\beta(0)} = \frac{\gamma'(0)}{\gamma(0)} = -1.
\end{equation*}
Consequently, we make the change of variables 
\begin{gather*}
	X_1=\frac{1}{\alpha}, \quad X_2=\frac{1}{\beta}, \quad X_3=\frac{1}{\gamma},\\
	Z_1=\frac{\alpha'}{\alpha}+1, \quad Z_2=\frac{\beta'}{\beta}+1, \quad Z_3=\frac{\gamma'}{\gamma}+1.
\end{gather*}
Then, the system becomes
\begin{equation}\label{infinitysystem}
	\begin{split}
	X_1' &= (1-Z_1)X_1\\
	Z_1' &= \frac{1}{s}(3Z_1 + Z_2 + Z_3) - Z_1(Z_1 + Z_2 + Z_3 - 3) \\
	&\quad + \frac{5s}{2-s} + \frac{1-s}{2-s}(3Z_1 + Z_2 + Z_3) + R_1
	\end{split}
\end{equation}
and cyclic permutations, where
\begin{equation*}
	R_1 = \frac{\alpha^4 - (\beta^2 - \gamma^2)^2}{2\alpha^2\beta^2\gamma^2}=\frac{1}{2}\left(\frac{\alpha^2}{\beta^2\gamma^2} - \frac{\beta^2}{\gamma^2\alpha^2} - \frac{\gamma^2}{\alpha^2\beta^2}\right) + \frac{1}{\alpha^2}
\end{equation*}
and cyclically. Denote $Z=(Z_1,Z_2,Z_3)^T$ and 
\begin{equation*}
	M_{-1}=\begin{pmatrix*} 3 & 1 & 1\\
	1 & 3 & 1\\
	1 & 1 & 3\end{pmatrix*}.
\end{equation*}
We multiply the equation for $Z_i'$ by $Z_i$ and sum over $i=1,2,3$ to obtain
\begin{multline*}
	\frac{1}{2}(\abs{Z}^2)' = \frac{1}{s} Z^T M_{-1} Z + \abs{Z}^2\left(4 - \frac{s}{2-s} - (Z_1 + Z_2 + Z_3)\right) + \frac{5s}{2-s}(Z_1+Z_2+Z_3) \\
	+ \frac{1-s}{2-s}(Z_1+Z_2+Z_3)^2 + Z_1R_1 + Z_2R_2 + Z_3R_3.
\end{multline*}

Suppose that $s\leq s_0$ for some $s_0\in(0,1)$. We would like to estimate $(\abs{Z}^2)'$ from below in order to use Grönwall's inequality. 

From the Peter-Paul ($2ab\leq \varepsilon^{-1}a^2 + \varepsilon b^2$ for each $a,b\in \mathbb{R}$ and $\varepsilon>0$) and Cauchy--Schwarz inequalities we obtain the estimates
\begin{align}
	Z_1 + Z_2 + Z_3 &\geq -\frac{1}{2A^2}\abs{Z}^2 - \frac{3}{2}A^2\label{Aineq}\\
	Z_1 R_1 + Z_2 R_2 + Z_3 R_3 &\geq -\frac{1}{2B^2} \abs{Z}^2 - \frac{B^2}{2}(R_1^2+R_2^2+R_3^2),\label{Bineq}
\end{align}
which hold uniformly in $s$ for any constants $A, B>0$, to be chosen later. We discard the singular term by using the fact that $M_{-1}$ is positive definite, thus obtaining the estimate
\begin{multline*}
	\frac{1}{2}(\abs{Z}^2)'\geq \abs{Z}^2\left(4 - \frac{s}{2-s} - (Z_1 + Z_2 + Z_3)\right) \\
	+ \frac{5s}{2-s}\left(-\frac{1}{2A^2}\abs{Z}^2 - \frac{3}{2}A^2\right) -\frac{1}{2B^2} \abs{Z}^2 - \frac{B^2}{2}(R_1^2+R_2^2+R_3^2).
\end{multline*}

We now consider the following assumptions, for some constants $C\in\mathbb{R}$, $D\geq 0$:
\begin{itemize}
	\item \textit{Assumption C: } $Z_1+Z_2+Z_3 - (4-\frac{s}{2-s})\leq C$.
	\item \textit{Assumption D: } $R_1^2 + R_2^2 + R_3^2 \leq D$.
\end{itemize}
Suppose that both assumptions hold at $s=s_0$. Define the set
\begin{equation*}
	S_{C,D} = \left\{s\in[0,s_0]\; \lvert\; \text{assumptions C and D hold}\right\}.
\end{equation*}
Let $s_1 = \sup\{s\in [0,s_0]\;\lvert\; s\notin S_{C,D}\}$. Then, $[s_1,s_0]\subset S_{C,D}$, since $S_{C,D}$ is closed by the continuity of $Z_i$ and $R_i$. Our goal is to show that for suitable values of $Z(s_0)$ and of the constants $A, B, C, D$, such an $s_1\geq 0$ cannot exist and consequently $S_{C,D} = [0,s_0]$. If this holds, then by assumption C, $Z$ would be bounded up until $s=0$ and therefore $\alpha, \beta, \gamma$ would be bounded and the metric would exist up to conformal infinity. To prove this, we estimate from above the growth of $Z$ and $R$ in $S_{C,D}$, and show that they cannot ``break free'' from the assumptions within $[s_1, s_0]\subset [0, s_0]$.

\subsection{Estimates under assumptions C and D}\label{CDsubsection}
All the estimates in this subsection use assumptions C and D, and consequently hold in $s\in [s_1, s_0]$. Within the latter interval, we have the estimate
\begin{equation*}
	\frac{1}{2}(\abs{Z}^2)'\geq -C\abs{Z}^2 + \frac{5s}{2-s}\left(-\frac{1}{2A^2}\abs{Z}^2 - \frac{3}{2}A^2\right) -\frac{1}{2B^2} \abs{Z}^2 - \frac{B^2}{2}D.
\end{equation*}

For convenience in applying Grönwall's inequality, we change sign again to $\tau\equiv -s = r-1$. Then, the estimate becomes
\begin{equation*}
	\frac{d}{d\tau}\abs{Z}^2 \leq \abs{Z}^2\left(2C + \frac{1}{B^2} - \frac{5\tau}{2+\tau}\frac{1}{A^2}\right) + B^2D - \frac{15A^2\tau}{2+\tau}.
\end{equation*}
We may write this schematically as
\begin{equation}\label{gronwallschematic}
	\frac{d}{d\tau}\abs{Z}^2\leq \theta(\tau)\abs{Z}^2+\phi(\tau)
\end{equation}
with
\begin{equation*}
	\theta(\tau) = 2C + \frac{1}{B^2} - \frac{5\tau}{2+\tau}\frac{1}{A^2}, \qquad \phi(\tau) = B^2D - \frac{15A^2\tau}{2+\tau}.
\end{equation*}
Note that $\theta, \phi$ are defined up to $\tau=0$. We recall Grönwall's inequality:

\begin{thm}\label{gronwall}
	Let $\alpha, \beta, u$ be continuous functions on $[a,b]$, with $\beta\geq 0$. If, for all $t\in[a,b]$, $u$ satisfies
	\begin{equation*}
		u(t)\leq \alpha(t) + \int_a^t \beta(z) u(z)\, dz,
	\end{equation*}
	then
	\begin{equation}\label{gronwallineq}
		u(t)\leq \alpha(t) + \int_a^t \alpha(z)\beta(z)\exp\left(\int_z^t\beta(r)\, dr\right)dz.
	\end{equation}
\end{thm}

We integrate (\ref{gronwallschematic}) to obtain, denoting $Z_0 = Z(\tau = -s_0)$, 
\begin{equation*}
	\abs{Z}^2(\tau)-\abs{Z_0}^2 \leq \int_{-s_0}^{\tau} \phi(t) dt + \int_{-s_0}^\tau \theta(t) \abs{Z}^2(t)\, dt.
\end{equation*}
We can now apply Theorem \ref{gronwall} with $\tau$ being the time coordinate, $a=-s_0$,
\begin{equation*}
	u = \abs{Z}^2(\tau), \qquad \alpha=\abs{Z_0}^2+\int_{-s_0}^{\tau} \phi(t)\, dt, \qquad\beta = \theta(\tau). 
\end{equation*}
Explicitly, we get
\begin{equation}\label{zetadfn}
	\abs{Z}^2(\tau) \leq \abs{Z_0}^2 + \int_{-s_0}^\tau \phi(z)\, dz + \int_{-s_0}^\tau \left(\abs{Z_0}^2 + \int_{-s_0}^z\phi(t)\, dt\right)\theta(z) \exp\left(\int^\tau_z \theta(r)\, dr\right) dz.
\end{equation}

Denote the right-hand side of (\ref{zetadfn}) by $\zeta^2$. Suppose that $\theta>0$. Then, both integrands of the right-hand side are positive functions of $z$, and consequently $\zeta$ is an increasing function of $\tau$. From this point on, we return to using the coordinate $s=-\tau$. We consequently have
\begin{equation*}
	\abs{Z(s)}\leq \zeta(s)
\end{equation*}
for $s\in[s_1, s_0]$. In fact, we note that since $s_1\geq 0$, we have for any $s$ that
\begin{equation*}
	\zeta(s)\leq \zeta_m,
\end{equation*}
where
\begin{equation*}
	\zeta_m \equiv \abs{Z_0}^2 + \int_{-s_0}^0 \phi(z)\, dz + \int_{-s_0}^0 \left(\abs{Z_0}^2 + \int_{-s_0}^z\phi(t)\, dt\right)\theta(z) \exp\left(\int^0_z \theta(r)\, dr\right) dz
\end{equation*}
is a constant that depends only on $s_0, Z(s_0), A, B, C, D$. In turn, for each $s$,
\begin{equation*}
	\left\vert\frac{\alpha'}{\alpha} + 1\right\vert\leq \zeta_m,
\end{equation*}
which gives after integrating that
\begin{equation*}
	\alpha(s_0) e^{s_0-s}e^{-(s_0-s)\zeta_m} \leq \alpha(s) \leq \alpha(s_0) e^{s_0-s}e^{(s_0-s)\zeta_m}
\end{equation*}
for each $s\in [s_1, s_0]$. The same estimate holds for $\beta$ and $\gamma$.

We now turn to estimates of $R_i$ by expanding
\begin{multline*}
	R_1^2+R_2^2+R_3^2=\frac{1}{2\alpha^4} + \frac{1}{2\beta^4} + \frac{1}{2\gamma^4} + \frac{1}{\alpha^2\beta^2} + \frac{1}{\gamma^2\alpha^2} + \frac{1}{\beta^2\gamma^2}
	+ \frac{3\alpha^4}{4\beta^4\gamma^4} + \frac{3\beta^4}{4\gamma^4\alpha^4} + \frac{3\gamma^4}{4\alpha^4\beta^4} \\
	- \frac{\beta^2}{\gamma^2\alpha^4} - \frac{\gamma^2}{\beta^2\alpha^4} - \frac{\alpha^2}{\gamma^2\beta^4} - \frac{\gamma^2}{\alpha^2\beta^4} - \frac{\alpha^2}{\beta^2\gamma^4} - \frac{\beta^2}{\alpha^2\gamma^4}.
\end{multline*}
Using the Cauchy--Schwarz inequality, we estimate
\begin{align*}
	R_1^2+R_2^2+R_3^2&\leq -\frac{3}{2\alpha^4} - \frac{3}{2\beta^4} - \frac{3}{2\gamma^4} + \frac{1}{\alpha^2\beta^2} + \frac{1}{\gamma^2\alpha^2} + \frac{1}{\beta^2\gamma^2}
	+ \frac{3\alpha^4}{4\beta^4\gamma^4} + \frac{3\beta^4}{4\gamma^4\alpha^4} + \frac{3\gamma^4}{4\alpha^4\beta^4}\\
	&\leq -\frac{1}{2\alpha^4} - \frac{1}{2\beta^4} - \frac{1}{2\gamma^4} + \frac{3\alpha^4}{4\beta^4\gamma^4} + \frac{3\beta^4}{4\gamma^4\alpha^4} + \frac{3\gamma^4}{4\alpha^4\beta^4}\\
	&= \frac{3}{8\gamma^4}\left(\frac{\alpha^4}{\beta^4} + \frac{\beta^4}{\alpha^4} - \frac{4}{3}\right) + \frac{3}{8\beta^4}\left(\frac{\alpha^4}{\gamma^4} + \frac{\gamma^4}{\alpha^4} - \frac{4}{3}\right) + \frac{3}{8\alpha^4}\left(\frac{\beta^4}{\gamma^4} + \frac{\gamma^4}{\beta^4} - \frac{4}{3}\right)\\
	&\equiv K_R.
\end{align*}
The quantity $K_R$ is suitable for our estimates, as the expressions within parentheses are all strictly positive. Let $\alpha_0=\alpha(s_0)$, $\beta_0 = \beta(s_0)$, $\gamma_0 = \gamma(s_0)$, and suppose that $\alpha_0\geq \beta_0\geq \gamma_0$. Since $x+1/x$ is greatest at the maximum of either $x$ or $1/x$, whichever is larger, we conclude that
\begin{multline*}
	K_R (s_1) \leq \frac{3}{8}e^{(4\zeta_m-4)(s_0-s_1)}\bigg(\left(\frac{\alpha_0^4}{\beta_0^4}e^{8(s_0-s_1)\zeta_m}+ \frac{\beta_0^4}{\alpha_0^4}e^{-8(s_0-s_1)\zeta_m}- \frac{4}{3}\right) \\
	+ \left(\frac{\alpha_0^4}{\gamma_0^4}e^{8(s_0-s_1)\zeta_m}+ \frac{\gamma_0^4}{\alpha_0^4}e^{-8(s_0-s_1)\zeta_m}- \frac{4}{3}\right) + \left(\frac{\beta_0^4}{\gamma_0^4}e^{8(s_0-s_1)\zeta_m}+ \frac{\gamma_0^4}{\beta_0^4}e^{-8(s_0-s_1)\zeta_m}- \frac{4}{3}\right)\bigg).
\end{multline*}

For all $s_1$, this expression can be loosely estimated by
\begin{equation*}
	K_R\leq \begin{cases*}
		\frac{3}{8\gamma_0^4}\left(\frac{\alpha_0^4}{\beta_0^4} + \frac{\beta_0^4}{\alpha_0^4} \right) + \frac{3}{8\beta_0^4}\left(\frac{\alpha_0^4}{\gamma_0^4} + \frac{\gamma_0^4}{\alpha_0^4}\right) + \frac{3}{8\alpha_0^4}\left(\frac{\beta_0^4}{\gamma_0^4} + \frac{\gamma_0^4}{\beta_0^4} \right) & if $\zeta_m< 1/3$\bigskip\\
		\frac{3}{8}e^{(4\zeta_m-4)s_0}\bigg(\left(\frac{\alpha_0^4}{\beta_0^4}e^{8s_0\zeta_m}+ \frac{\beta_0^4}{\alpha_0^4}e^{-8s_0\zeta_m}\right) \\
		+ \left(\frac{\alpha_0^4}{\gamma_0^4}e^{8s_0\zeta_m}+ \frac{\gamma_0^4}{\alpha_0^4}e^{-8s_0\zeta_m}\right) + \left(\frac{\beta_0^4}{\gamma_0^4}e^{8s_0\zeta_m}+ \frac{\gamma_0^4}{\beta_0^4}e^{-8s_0\zeta_m}\right)\bigg) & if $\zeta_m \geq 1/3$.
	\end{cases*}
\end{equation*}
Denote the top value by $K_0$ and the lower value by $K_1$. We have thus laid the groundwork for the following lemma.

\begin{lem}\label{infinitylemma}
	Let $Z$ be a solution to the Einstein equation (\ref{infinitysystem}) in a neighbourhood of $s_0\in (0,1)$ and let $A,B>0$ be constants. Let $C\in \mathbb{R}$, $D>0$ be such that assumptions $C$ and $D$ hold at $s=s_0$, recalling that
	\begin{itemize}
		\item \textit{Assumption C: } $Z_1+Z_2+Z_3 - (4-\frac{s}{2-s})\leq C$.
		\item \textit{Assumption D: } $R_1^2 + R_2^2 + R_3^2 \leq D$.
	\end{itemize}
	Suppose that the following hypotheses, which depend only on $Z(s_0)$ and the choices of $A, B, C, D$, hold:
	\begin{enumerate}
		\item $2C + \frac{1}{B^2}\geq 0$, \label{inf1}
		\item $\zeta_m<1/3$, \label{inf2}
		\item $\sqrt{3}\zeta_m < 4 + C - s_0/(2-s_0)$, \label{inf3}
		\item $K_0<D$. \label{inf4}
	\end{enumerate}
	Then the solution $Z$ extends to $s=0$.
\end{lem}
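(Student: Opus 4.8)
The plan is a continuity (bootstrap) argument on the set $S_{C,D}$, as foreshadowed before the statement: I would show that hypotheses (\ref{inf1})--(\ref{inf4}) are exactly what is needed to prevent $Z$ and the $R_i$ from ``escaping'' assumptions $C$ and $D$. Argue by contradiction: suppose $S_{C,D}\neq[0,s_0]$, and let $s_1=\sup\{s\in[0,s_0]:s\notin S_{C,D}\}\geq 0$. Since $Z$ and the $R_i$ are continuous on the interval of existence, $S_{C,D}$ is closed, so $[s_1,s_0]\subset S_{C,D}$ while points of the complement accumulate at $s_1$ from below; by continuity this forces at least one of assumptions $C$, $D$ to hold with \emph{equality} at $s=s_1$. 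The whole task is then to show that under (\ref{inf1})--(\ref{inf4}) both assumptions hold \emph{strictly} at $s_1$, which is a contradiction and forces $S_{C,D}=[0,s_0]$.

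All the estimates of the preceding subsection are valid on $[s_1,s_0]\subset S_{C,D}$. For assumption $C$, I would first observe that (\ref{inf1}) makes $\theta(\tau)\geq 0$ on $\tau\in[-s_0,0]$ --- there $\tfrac{5\tau}{2+\tau}\leq 0$, so $\theta(\tau)\geq 2C+\tfrac{1}{B^2}\geq 0$ --- which is what legitimises the monotonicity of $\zeta$ and hence the uniform bound $|Z(s)|\leq\zeta_m$ on $[s_1,s_0]$. Evaluating at $s_1$, Cauchy--Schwarz gives $Z_1+Z_2+Z_3\leq\sqrt3\,\zeta_m$, and since $t\mapsto t/(2-t)$ is increasing with $s_1\leq s_0$,
\[
	Z_1+Z_2+Z_3-\Bigl(4-\tfrac{s_1}{2-s_1}\Bigr)\;\leq\;\sqrt3\,\zeta_m-4+\tfrac{s_0}{2-s_0}\;<\;C
\]
by (\ref{inf3}), so $C$ is strict at $s_1$. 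For assumption $D$, the bound $|\alpha'/\alpha+1|\leq\zeta_m$ and its cyclic analogues integrate to the two-sided exponential pinching of $\alpha,\beta,\gamma$ by $\alpha_0,\beta_0,\gamma_0$ used above to bound $R_1^2+R_2^2+R_3^2\leq K_R$; hypothesis (\ref{inf2}), $\zeta_m<1/3$, is precisely the condition selecting the first branch of the case distinction, so $R_1^2+R_2^2+R_3^2\leq K_R(s_1)\leq K_0$, and then (\ref{inf4}), $K_0<D$, makes $D$ strict at $s_1$. This closes the bootstrap, so $S_{C,D}=[0,s_0]$.

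It remains to deduce that the solution extends to $s=0$: with $S_{C,D}=[0,s_0]$, assumption $C$ and the Grönwall bound keep $|Z|$ bounded on $(0,s_0]$ while the exponential pinching keeps $\alpha,\beta,\gamma$ (equivalently $X_1,X_2,X_3$) in a fixed compact subset of $(0,\infty)$, so the right-hand side of (\ref{infinitysystem}) stays bounded away from the singular locus and the solution cannot blow up as $s\downarrow 0$. The one point needing care in the write-up is the endpoint bookkeeping --- ensuring the estimates of the previous subsection genuinely apply on $[s_1,s_0]$, presupposing only a solution satisfying $C$ and $D$ there rather than invoking the conclusion $s=0$; since that is how they were derived this is fine, but worth stating explicitly. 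All the analytic work (the Grönwall estimate and the $K_R$ bound) is already done before the statement, so the proof of the lemma is essentially this assembly, the only real subtlety being to check that each of (\ref{inf1})--(\ref{inf4}) lands exactly where it is needed.
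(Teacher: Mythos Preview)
Your proposal is correct and follows essentially the same approach as the paper: define $s_1=\sup\{s\in[0,s_0]:s\notin S_{C,D}\}$, use closedness to get $[s_1,s_0]\subset S_{C,D}$, apply the pre-established estimates there, and then use hypotheses (\ref{inf1})--(\ref{inf4}) to show both assumptions hold strictly at $s_1$, contradicting the definition of $s_1$. Your phrasing via ``equality at $s_1$'' is a clean equivalent of the paper's ``there exists $\delta>0$ with $s_1-\delta\in S_{C,D}$'' step, and your explicit remark that (\ref{inf1}) is what makes $\theta\geq 0$ (and hence legitimises the Gr\"onwall bound and the monotonicity of $\zeta$) is a point the paper's proof only mentions in passing.
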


\textit{Remark.} The constants $A, B, C$ and $D$ will eventually be chosen by trial and error, depending only on $s_0$ and $Z(s_0)$, taking into consideration the following heuristics.
\begin{itemize}
	\item We choose $A, B>0$ to balance out the contributions from the linear and constant terms of the differential inequality (\ref{gronwallschematic}) in order to get a better Grönwall's estimate. The freedom to choose $A$ and $B$ comes from the estimates (\ref{Aineq}) and (\ref{Bineq}) respectively.
	\item We choose $C$ as small as possible so that the estimate $\zeta_m$ of the right-hand side of (\ref{zetadfn}) remains under $\frac{1}{\sqrt{3}}(4+C+s_0/(2-s_0))$, in order for assumption C to hold throughout $s\in [0, s_0]$.
	\item Since we are assuming that $\zeta_m< 1/3$, we have $K_R\leq K_0$ for all $s$ and consequently assumption D holds so long as we pick $D>K_0$. As $B^2D$ appears in the constant term of (\ref{gronwallschematic}), we choose $D$ to be as small as possible, namely $K_0+\delta$ for some arbitrarily small $\delta>0$ which plays no role in the numerics.
\end{itemize}
\begin{proof}
	Recall that we defined
	\begin{equation*}
		S_{C,D} = \left\{s\in[0,s_0]\; \lvert\; \text{assumptions C and D hold}\right\}
	\end{equation*}
	and 
	\begin{equation}\label{s1dfn}
		s_1 = \sup\{s\in [0,s_0]\;\lvert\; s\notin S_{C,D}\}.
	\end{equation}	
	We estimate using the above considerations and hypothesis (\ref{inf3}) that
	\begin{equation*}
		\abs{Z_1(s_1)}+\abs{Z_2(s_1)}+\abs{Z_3(s_1)}\leq \sqrt{3}\abs{Z(s_1)} \leq 3\zeta_m < 4+C-\frac{s_0}{2-s_0},
	\end{equation*}
	where the second inequality uses Theorem \ref{gronwall}, which is possible as hypothesis (\ref{inf1}) ensures that $\theta\geq 0$. In particular,
	\begin{equation*}
		Z_1(s_1) + Z_2(s_1) + Z_3(s_1) - \left(4-\frac{s_1}{2-s_1}\right)<C.
	\end{equation*}
	Since $\zeta_m<1/3$ by hypothesis (\ref{inf2}), we have
	\begin{equation*}
		R_1(s_1)^2 + R_2(s_1)^2 + R_3(s_1)^2\leq K_R \leq K_0<D.
	\end{equation*}
	By the continuity of $Z_i$ and $R_i$, there is consequently some $\delta>0$ such that $s_1+\delta\in S_{C, D}$, which contradicts (\ref{s1dfn}). Consequently, such an $s_1$ does not exist and $S_{C,D} = [0,s_0]$. In particular, $Z$ and therefore $X$ are bounded up until $s=0$ or $r=1$, which proves the lemma.
\end{proof}

\textit{Remark.} Solutions $Z$ which extend to $s=0$ or equivalently $r=1$ satisfy the boundary condition 
\begin{equation*}
	\frac{d\alpha}{dr}(1) = \alpha(1),\qquad \frac{d\beta}{dr}(1) = \beta(1),\qquad  \frac{d\gamma}{dr}(1) = \gamma(1).
\end{equation*}
We deduce from (\ref{confcompacttransform}) that 
\begin{equation*}
	\lim_{t\to\infty} \frac{\dot a}{a} = \lim_{t\to\infty} \frac{\dot b}{b} = \lim_{t\to\infty} \frac{\dot c}{c} = 1.
\end{equation*}
Consequently, the solutions tend to a ``hyperbolic infinity'' $a\sim a_\infty e^t, b \sim b_\infty e^t, c\sim c_\infty e^t$ for some constants $a_\infty, b_\infty, c_\infty$.

\section{Singular initial value problem and approximate solution}\label{ivpsection}
In this section, we will reformulate the Einstein condition with boundary conditions corresponding to $\mathcal{O}(-4)$ as a first-order singular initial value problem of a form satisfying the hypotheses of \cite[Theorem 4.7]{FH17}. We then linearise the initial value problem around an approximate solution $\hat\eta$.

Consider the tangential equation (\ref{tangentialconformal}) under the change of variables
\begin{equation*}
	X_1=\frac{1}{a}, \qquad X_2=\frac{1}{b},\qquad X_3=\frac{1}{c},\qquad Y_1=\frac{\dot{a}}{a}, \qquad Y_2=\frac{\dot{b}}{b}, \qquad Y_3=\frac{\dot{c}}{c}.
\end{equation*}
In the negatively curved case, it takes the form
\begin{align*}
	\dot{X}_1 &= -X_1Y_1\\
	\dot{Y}_1 &= -Y_1(Y_1 + Y_2 + Y_3) + \frac{1}{2}X_1^2X_2^2X_3^2 \left(X_1^{-4} - (X_2^{-2}-X_3^{-2})^2\right)+3
\end{align*}
and cyclically. Similarly to the equations at conformal infinity, we denote
\begin{equation*}
	R_1 = \frac{a^4 - (b^2-c^2)^2}{2a^2b^2c^2} = \frac{1}{2}X_1^2X_2^2X_3^2 \left(X_1^{-4} - (X_2^{-2}-X_3^{-2})^2\right)
\end{equation*}
and cyclically. Since $a\sim 2t$, $b\sim h+b_1t$ and $c\sim h-b_1t$, we substitute
\begin{gather*}
	X_1 = \frac{1}{2t} + \eta_1, \qquad
	X_2 = \frac{1}{h} - \frac{b_1t}{h^2} + t\eta_2, \qquad
	X_3 = \frac{1}{h} + \frac{b_1t}{h^2} + t\eta_3\\
	Y_1 = \frac{1}{t} + \eta_4,\qquad
	Y_2 = \frac{b_1}{h} + \eta_5, \qquad
	Y_3 = -\frac{b_1}{h} + \eta_6.
\end{gather*}
After some computation, we obtain that the tangential Einstein ODEs (\ref{tangentialconformal}) become a first-order system in terms of $\eta(t) = (\eta_1(t), \eta_2(t), \eta_3(t), \eta_4(t), \eta_5(t), \eta_6(t))^T$ of the form
\begin{equation}\label{etaODEslope2}
	\dot\eta = \frac{1}{t}L\eta + M(t,\eta),
\end{equation}
where $M(t,\eta)=(M_1,M_2,M_3,M_4,M_5,M_6)^T$ and
\begin{equation*}
	L=\begin{pmatrix*}-1 & 0 & 0 & -1/2 & 0 & 0\\
	0 & -1 & 0 & 0 & -1/h & 0\\
	0 & 0 & -1 & 0 & 0 & -1/h\\
	0 & 0 & 0 & -2 & -1 & -1\\
	0 & -h/2 & h/2 & 0 & -1 & 0\\
	0 & h/2 & -h/2 & 0 & 0 & -1\end{pmatrix*},
\end{equation*}
\begin{equation}\label{Mformulas}
	\begin{split}
		M_1&=-\eta_1\eta_4\\
		\qquad M_2&=\frac{b_1^2}{h^3} + \frac{b_1}{h^2}\eta_5 - \frac{b_1}{h}\eta_2 - \eta_2\eta_5\\
		M_3&=\frac{b_1^2}{h^3} - \frac{b_1}{h^2}\eta_6 + \frac{b_1}{h}\eta_3 - \eta_3\eta_6\\
		M_4&=-\eta_4(\eta_4 + \eta_5 + \eta_6) + R_1 + 3\\
		M_5&=-\left(\frac{b_1}{h} + \eta_5\right)(\eta_4 + \eta_5 + \eta_6) + R_2 - \frac{1}{t}\left(\frac{b_1}{h} - \frac{1}{2}h(\eta_2-\eta_3)\right)+3\\
		M_6&=-\left(-\frac{b_1}{h}+\eta_6\right)(\eta_4+\eta_5+\eta_6) + R_3 - \frac{1}{t}\left(-\frac{b_1}{h} + \frac{1}{2}h(\eta_2-\eta_3)\right) + 3.
	\end{split}
\end{equation}

A solution to (\ref{etaODEslope2}) in a neighbourhood of zero with $\eta(0)=0$ corresponds to an Einstein metric with $b(0)=c(0)=h$ and $\dot b(0) = -\dot c(0) = b_1$. Although it does not seem so at first glance, $M(t, \eta)$ is indeed bounded near $t=0$ and continuous in both arguments away from $b,c=0$.

The matrix $L$ has eigenvalues $\{-2,-1,0\}$ and consequently, by \cite[Theorem 4.7]{FH17}, the singular initial value problem consisting of the system (\ref{etaODEslope2}) subject to the initial condition $\eta(0)=0$ is well-posed in a neighbourhood of $t=0$ and there is local existence and uniqueness of solutions.

We will now consider a smooth ``approximate solution'' $\hat{\eta}(t)$ to the above singular initial value problem, and consider the system (\ref{etaODEslope2}) near $\hat{\eta}(t)$. More specifically, it should satisfy
\begin{equation*}
	\hat{E}_2(t) = \frac{1}{t}L\hat{\eta}(t) + M(t, \hat\eta) - \dot{\hat\eta}(t),
\end{equation*}
for some smooth function $\hat{E}_2$ for which we will determine bounds after the fact. We will also require that $\hat\eta(0)=0$. Let $\mu(t) = \eta(t) - \hat\eta(t)$. Then
\begin{align*}
	\dot\mu(t) &= \dot\eta(t) - \dot{\hat{\eta}}(t)\\
	&= \frac{1}{t} L\eta + M(t,\eta) - \frac{1}{t}L\hat\eta - M(t, \hat\eta) + \hat{E}_2\\
	&\equiv \frac{1}{t}L\mu + M_l(t,\hat\eta)\cdot\mu + M_{nl}(t,\hat\eta,\mu) + \hat{E}_2,
\end{align*}
where $M_l$ and $M_{nl}$ designate the linear and nonlinear parts of $M(t,\eta)-M(t,\hat\eta)$ considered as a function of $\mu$. More specifically, since $M(t,\cdot)$ is continuous, we have
\begin{equation*}
	M(t,\eta) - M(t,\hat\eta) = \sum_{i=1}^6 \frac{\partial M}{\partial \eta_i}(t,\hat\eta)\mu_i + \mathcal{O}(\mu^2),
\end{equation*}
so that 
\begin{gather*}
	M_l(t,\hat\eta) = \frac{\partial M}{\partial \vec{\eta}} (t,\hat\eta) = \left(\frac{\partial M_i}{\partial \eta_j}(t,\hat\eta)\right)_{ij}\\
	M_{nl}(t, \hat\eta, \mu) = M(t,\eta) - M(t,\hat\eta) - M_l(t, \hat\eta)\cdot\mu = \mathcal{O}(\mu^2).
\end{gather*}

In order to use a fixed-point argument to show that $\hat\eta$ can be perturbed to a genuine solution, we will eventually need bounds on $M_l(t, \hat\eta)$ and $M_{nl}(t, \hat\eta, \mu)$. We will assume that $C_l, C_{nl}>0$ are constants such that the following hold throughout the subsequent sections, for all $t\in (0, t_f)$:
\begin{gather} 
	\abs{M_l(t, \hat\eta)}\leq C_l\label{Cldfn}\\
	\abs{M_{nl}(t, \hat\eta, \mu)}\leq C_{nl}\abs{\mu(t)}^2 \label{Cnldfn}.
\end{gather}
The constants $C_l$ and $C_{nl}$ depend on the choice of approximate solution $\hat\eta$ and will be determined later using the expressions in appendix \ref{MlMnlsection}.

\section{Proof of existence : fixed-point argument starting from the singular orbit}\label{fixedpointsection}

\subsection{$C^1$-type estimate}

Consider the linear inhomogeneous initial value problem
\begin{equation}\label{firstinhom}
	\dot\mu(t) = \frac{1}{t}L\mu + F(t)
\end{equation}
subject to the initial conditions $\mu(0)=0$. It has a unique solution, up to some time which we will denote $t_f$. Since the equation is linear, we may define a solution map $\mathcal{L}$ by
\begin{equation*}
	\mu = \mathcal{L} F.
\end{equation*}
We will prove (part of) the equivalent of \cite[Lemma 13]{BH26}. The proof is mostly identical, but we reproduce it here for completeness.
\begin{lem}\label{C1lemma}
	For each $t>0$, we have the estimate $\abs{(\mathcal{L}F)(t)/t} \leq B\norm{F}_{C^0((0,t))}$, where
	\begin{equation*}
		B = \frac{19}{8} + \frac{1}{e} + \frac{1}{2}\left(h + \frac{1}{h}\right).
	\end{equation*}
\end{lem}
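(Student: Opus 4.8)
The plan is to obtain an explicit integral representation of the solution map $\mathcal{L}$ by diagonalising (or at least triangularising) the matrix $L$, and then to bound each resulting scalar convolution integral using the eigenvalue structure $\{-2,-1,0\}$. Concretely, since $\mu$ solves $\dot\mu = \tfrac1t L\mu + F$ with $\mu(0)=0$, one writes $\mu(t) = \int_0^t \Phi(t)\Phi(s)^{-1} F(s)\, ds$, where $\Phi(t)$ is the fundamental matrix of $\dot\Phi = \tfrac1t L\Phi$; because $L$ is constant, $\Phi(t) = t^L = \exp(L\log t)$. In an eigenbasis of $L$, the scalar building block is $\int_0^t (t/s)^{\lambda} F_\cdot(s)\, ds$ for $\lambda\in\{-2,-1,0\}$, and $\left|\int_0^t (t/s)^\lambda\, ds\right| = \frac{t}{1-\lambda}$ (for $\lambda=-2$ this is $t/3$; for $\lambda=-1$ it is $t/2$; for $\lambda=0$ it is $t$), each finite and nonsingular at $0$, which is exactly why the singular IVP is well-posed. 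Dividing by $t$ and pulling $\norm{F}_{C^0((0,t))}$ out of the integral then yields a bound of the form $\abs{(\mathcal{L}F)(t)/t}\le B\norm{F}_{C^0((0,t))}$ with $B$ assembled from these constants together with the norms of the change-of-basis matrices.

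First I would record the block structure of $L$: it is block lower-triangular-ish in the sense that the $(\eta_1,\eta_4)$, $(\eta_2,\eta_3)$, $(\eta_5,\eta_6)$ couplings are governed by small sub-blocks, with the $\eta_4,\eta_5,\eta_6$ block $\begin{pmatrix*} -2 & -1 & -1\\ 0 & -1 & 0\\ 0 & 0 & -1\end{pmatrix*}$ decoupled from $\eta_1,\eta_2,\eta_3$, which only receive forcing from the $Y$-variables through the off-diagonal entries $-1/2, -1/h, -1/h$ and $-h/2, h/2$. I would compute $t^L$ explicitly block by block: the $\eta_4$-equation alone gives a $(t/s)^{-2}$ kernel contributing $1/3$; the $\eta_5,\eta_6$ equations give $(t/s)^{-1}$ kernels contributing $1/2$ each, but also feed into $\eta_4$ through integrals of the form $\int_0^t (t/s)^{-2}(s/\sigma)^{-1}\cdots$, i.e. iterated integrals producing terms like $\int_0^t (t/s)^{-2} s \cdot s^{-1}\, ds$-type expressions; carefully these still evaluate to rational multiples of $t$. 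The constant $\frac1e$ in $B$ strongly suggests that somewhere a term $(t/s)^{-2}\log(t/s)$ or similar appears (from a resonance/near-resonance in the triangular block), whose integral $\int_0^t (t/s)^{-2}\log(s/t)\,ds$ over the relevant range is maximised with value proportional to $1/e$; I would track that term with care. The $\tfrac12(h+\tfrac1h)$ piece clearly comes from the cross-terms: the $1/h$ entries in rows 2,3 paired with the $h/2$ entries in rows 5,6, combined with the $t/2$ integral constant, give $\tfrac12\cdot\tfrac1h\cdot\tfrac{h}{?}$-type contributions summing to $\tfrac12(h+\tfrac1h)$.

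After assembling the explicit kernel, the remaining work is the estimate itself: for each component of $\mu$, bound $\abs{\mu_i(t)}$ by $\norm{F}_{C^0((0,t))}$ times the sum of absolute values of the scalar kernel integrals feeding into it, then divide by $t$ and take the maximum over $i$ to get $B$. The main obstacle, I expect, is bookkeeping rather than conceptual: getting the iterated integrals through the non-normal triangular blocks exactly right, so that the sharp constant $\frac{19}{8} + \frac{1}{e} + \frac12(h+\tfrac1h)$ comes out and not something looser. A secondary subtlety is justifying that $\mathcal{L}F$ is well-defined and continuous up to $t=0$ with $\mu(0)=0$ — this is where one invokes that all eigenvalues of $L$ have real part $\le 0$ (indeed $<1$), so that $\int_0^t (t/s)^{\mathrm{Re}\,\lambda}\,ds$ converges; but since the paper has already cited \cite[Theorem 4.7]{FH17} for well-posedness of exactly this type of singular IVP, I would simply invoke that and focus on the quantitative bound. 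Since the paper explicitly says the proof is "mostly identical" to \cite[Lemma 13]{BH24}, I would follow that reference's organisation, substituting the present $6\times 6$ matrix $L$ and its eigendata throughout.
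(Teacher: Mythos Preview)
Your approach is essentially the same as the paper's---both use the variation-of-constants formula $\mu(t)=\int_0^t \exp\!\bigl(L(\log t-\log s)\bigr)F(s)\,ds$ and bound the propagator $\exp(L\log(t/s))$. The organisational difference is that the paper does \emph{not} pass to an eigenbasis or integrate scalar kernels $(t/s)^\lambda$ separately. Instead it computes the full $6\times 6$ matrix $\mathcal{B}(u)=\exp(L\log(1/u))$, $u=s/t\in(0,1)$, explicitly in the original coordinates, splits it into four $3\times 3$ blocks, and bounds each block's operator norm uniformly in $u$; the $1/e$ comes from $\sup_{u\in(0,1)} u\log(1/u)$ appearing in an off-diagonal block (reflecting the Jordan structure you anticipated), and the $\tfrac12(h+1/h)$ from the other off-diagonal blocks. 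Having $\|\mathcal{B}(u)\|\le B$ uniformly then gives $|\mu(t)|\le\int_0^t B|F(s)|\,ds\le Bt\|F\|_{C^0}$ in one line.

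Your plan to integrate each eigen-component first (obtaining factors $1/(1-\lambda)$) and then combine via change-of-basis norms would certainly yield \emph{a} finite constant, but reproducing the \emph{specific} constant $B=\tfrac{19}{8}+\tfrac1e+\tfrac12(h+\tfrac1h)$ of the lemma is tied to the paper's block-wise uniform pointwise bound; your ordering (integrate, then combine) would naturally produce a different expression. So conceptually you have the right proof, but if the exact value of $B$ matters downstream (it does, since $B$ feeds into $\varepsilon_0$), you would need to switch to the paper's direct matrix computation rather than the eigenbasis route.
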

\begin{proof}
	We note that (\ref{firstinhom}) can be written
	\begin{equation*}
		\frac{d}{dt}(\exp(-L\log t)\mu(t)) = \exp(-L\log t) F(t),
	\end{equation*}
	so that for each $0<t_0<t$,
	\begin{equation}\label{preBequation}
		\mu(t) - \exp(L(\log t - \log t_0))\mu(t_0) = \exp(L\log t)\int_{t_0}^t\exp(-L\log s)F(s)\, ds.
	\end{equation}
	Consider the prefactor of $\mu(t_0)$, which we denote
	\begin{equation*}
		\mathcal{B}\equiv\exp(L(\log t - \log t_0)).
	\end{equation*}
	We would like to show that the operator norm (denoted $\norm{\cdot}$) of $\mathcal{B}$ is bounded by a constant independent of $t$ and $t_0$, in order to eventually take the limit of (\ref{preBequation}) as $t_0\to 0$. For simplicity, we split $\mathcal{B}$ into submatrices
	\begin{equation*}
		\mathcal{B} = \begin{pmatrix*} B_1 & B_2 \\ B_3 & B_4\end{pmatrix*},
	\end{equation*}
	where
	\begin{gather*}
		B_1 = \begin{pmatrix*}
			u & 0 & 0\\[0.1cm]
			0 & \frac{1}{4}(u+1)^2 & -\frac{1}{4}(u-1)^2\\[0.1cm]
			0 & -\frac{1}{4}(u-1)^2 & \frac{1}{4}(u+1)^2
		\end{pmatrix*},\\
		B_2 = \begin{pmatrix*}\frac{1}{2}(u^2-u) & \frac{1}{2}\left(u^2-u+u\log\frac{1}{u}\right) & \frac{1}{2}\left(u^2-u+u\log\frac{1}{u}\right)\\[0.1cm]
			0 & \frac{1}{4h}\left(u^2 - 1 - 2u\log \frac{1}{u}\right) & \frac{1}{4h}\left(1-u^2-2u\log\frac{1}{u}\right)\\[0.1cm]
			0 & \frac{1}{4h}\left(1-u^2-2u\log\frac{1}{u}\right) & \frac{1}{4h}\left(u^2 - 1 - 2u\log \frac{1}{u}\right)\end{pmatrix*},\\
		B_3 = \frac{h}{4}\begin{pmatrix*}0 & 0 & 0\\
			0 & u^2-1 & 1-u^2\\
			0 & 1-u^2 & u^2-1
			\end{pmatrix*}, \quad B_4 = \begin{pmatrix*}u^2 & u^2-u & u^2-u\\[0.1cm]
			0 & \frac{1}{4}(u+1)^2 & -\frac{1}{4}(u-1)^2\\[0.1cm]
			0 & -\frac{1}{4}(u-1)^2 & \frac{1}{4}(u+1)^2\end{pmatrix*},
	\end{gather*}
	and $u=t_0/t$, so that $0<u<1$. We compute
	\begin{equation*}
		\norm{B_1} = \frac{1}{2}(1+u^2), \qquad
		\norm{B_3} = \frac{h}{2}(1-u^2), \qquad
		\max_{u\in[0,1]}\norm{B_4} \leq 1.
	\end{equation*}
	Let
	\begin{equation*}
		\tilde{B_2} = \begin{pmatrix*}\frac{1}{4h}\left(u^2 - 1 - 2u\log \frac{1}{u}\right) & \frac{1}{4h}\left(1-u^2-2u\log\frac{1}{u}\right)\\[0.1cm]
			\frac{1}{4h}\left(1-u^2-2u\log\frac{1}{u}\right) & \frac{1}{4h}\left(u^2 - 1 - 2u\log \frac{1}{u}\right)
		\end{pmatrix*}.
	\end{equation*}
	It satisfies
	\begin{equation*}
		\norm{\tilde{B_2}} = \max\left\{\frac{1}{2h}(1-u^2), \frac{1}{h}u\log\frac{1}{u}\right\},
	\end{equation*}
	which implies that, since $u\log \frac{1}{u}\leq \frac{1}{e}$ for all $u\in[0,1]$,
	\begin{equation*}
		\max_{u\in [0,1]} \norm{\tilde{B_2}} \leq \frac{1}{2h}.
	\end{equation*}
	Finally, since $\frac{1}{2}(u^2-u)\leq \frac{1}{8}$,
	\begin{equation*}
		\norm{B_2}\leq \frac{3}{8} + \frac{1}{e} + \norm{\tilde{B_2}} \leq \frac{3}{8} + \frac{1}{e} + \frac{1}{2h}.
	\end{equation*}
	We thus have, for any $0<u<1$, that
	\begin{equation*}
		\norm{\mathcal{B}}\leq \norm{B_1} + \norm{B_2} + \norm{B_3} + \norm{B_4} \leq \frac{19}{8} + \frac{1}{e} + \frac{1}{2}\left(h + \frac{1}{h}\right)\equiv B.
	\end{equation*}
	
	Consequently, since $\mu(0)=0$, we can take the limit $t_0\to 0$ in (\ref{preBequation}) to obtain
	\begin{equation*}
		\mu(t) = \int_0^t \exp(L(\log t - \log s)) F(s)\, ds.
	\end{equation*}
	Then, we have that
	\begin{equation*}
		\abs{\mu(t)} \leq \int_0^t B\abs{F(s)}\, ds \leq Bt_f\norm{F}_{C^0((0,t))},
	\end{equation*}
	as well as
	\begin{equation*}
		\left\vert\frac{\mu(t)}{t}\right\vert\leq B\norm{F}_{C^0((0,t))}.
	\end{equation*}
	
\end{proof}

\subsection{Linear inhomogeneous initial value problem and $C^0$ estimate}

Now we consider the inhomogeneous problem which includes the linear term, namely 
\begin{equation}\label{inhomog_linear}
	\dot\mu(t) = \frac{1}{t}L\mu + M_l(t,\hat\eta)\mu + F_2(t).
\end{equation}
It can be written as
\begin{equation*}
	\mu = \mathcal{L}(M_l(t,\hat\eta)\mu + F_2(t))
\end{equation*}
with, in the notation of the previous subsection, $F=M_l(t,\hat\eta)\mu + F_2(t)$. We then obtain by (\ref{Cldfn}) and Lemma \ref{C1lemma} the estimate
\begin{equation}\label{C1typeest}
	\left\vert\frac{\mu(t)}{t}\right\vert\leq B \left(C_l \norm{\mu}_{C^0((0,t))} + \norm{F_2}_{C^0((0,t))}\right).
\end{equation}

\subsubsection{Non-negativity of $L$}

We note some important properties of the matrix $L$. The characteristic polynomial of its symmetrization $L_{sym} = \frac{1}{2}(L+L^T)$ is given by
\begin{multline*}
	p_{sym}(x) = \left(x-\frac{(h-1)^2}{2h}\right)\left(x+\frac{(h+1)^2}{2h}\right)\\
	\times \left(x^4 + 5x^3 + \left(\frac{135}{16} - \frac{1}{4h^2}\right)x^2 + \left(\frac{47}{8} - \frac{3}{4h^2}\right)x + \frac{23}{16} - \frac{31}{64h^2}\right).
\end{multline*}
We will assume throughout the remainder of this work that $h>\frac{1}{2}\sqrt{\frac{31}{23}}$ (noting that the boundary of the shaded region of Figure \ref{b1hplot} at $b_1=0$ appears to be at $\frac{1}{2}\sqrt{\frac{31}{23}}\approx 0.58$). Then, all the coefficients of $p_{sym}$ are positive and by Descartes' rule of signs $L_{sym}$ has precisely one positive eigenvalue,
\begin{equation*}
	\lambda\equiv \frac{(h-1)^2}{2h}.
\end{equation*}

\subsubsection{$C^0$ estimate}
We would like to estimate the quantity $\frac{2}{t}\mu^T L\mu$. First, we note that $\mu^T L\mu = \mu^T L_{sym}\mu \leq \lambda \abs{\mu}^2$. An issue is that the prefactor $2/t$ blows up at $t=0$, so in a region surrounding $t=0$ we must instead use the ``$C^1$-type estimate'' (\ref{C1typeest}). To do so, we break the time domain into two parts, delimited by $0<t_0<t_f$, and write
\begin{equation*}
	\frac{2}{t}\lambda\abs{\mu}^2\leq h(t) + r(t)\abs{\mu}^2,
\end{equation*}
where $h(t)$ is some function vanishing outside of $(0,t_0)$, and $r(t)$ is the continuous and uniformly bounded function
\begin{equation*}
	r(t)=\begin{cases*}
		\frac{2}{t_0}\lambda & if $0\leq t<t_0$\\
		\frac{2}{t}\lambda & if $t\geq t_0$.
	\end{cases*}
\end{equation*}
In practice, $t_0$ will be chosen to be much smaller than $t_f$. To choose a suitable function $h$, note first that
\begin{equation*}
	\frac{2}{t} - \frac{2}{t_0} = 2\left(1-\frac{t}{t_0}\right)\frac{1}{t}
\end{equation*}
and that $(a+b)^2\leq 2a^2+2b^2$, so we choose, using (\ref{C1typeest}),
\begin{equation}\label{hdfn}
	h(t)=\begin{cases*}2t\left(1-\frac{t}{t_0}\right)\cdot 2\lambda B^2\left(C_l^2\norm{\mu}_{C^0((0,t))}^2 + \norm{F_2}_{C^0((0,t))}^2\right) & if $0\leq t<t_0$\\
		0 & if $t\geq t_0$.
		\end{cases*}
\end{equation}
We then estimate the solution to (\ref{inhomog_linear}) by
\begin{align*}
	2\mu\cdot \dot\mu = (\abs{\mu}^2)^\cdot &= \frac{2}{t}\mu^T L \mu + 2\mu^T M_l(t,\hat\eta)\mu + 2\mu \cdot F_2(t)\\
	&\leq \frac{2}{t}\lambda\abs{\mu}^2 + 2C_l\abs{\mu}^2 + 2\abs{\mu}\abs{F_2}\\
	&\leq \left(r(t) + 2C_l +1\right)\abs{\mu(t)}^2 + \abs{F_2(t)}^2 + h(t).
\end{align*}
Another version of Grönwall's inequality states the following.
\begin{lem}
Let $\alpha, \beta$ be continuous real functions on the interval $[0,T]$ with $\alpha\geq 0$. If $y\geq 0$ is a differentiable function on $[0, T]$ satisfying $y(0)=0$ and
\begin{equation*}
	\dot y\leq \alpha y + \beta,
\end{equation*}
then, for each $t\in [0, T)$,
\begin{equation*}
	y(t)\leq \mathcal{I}(t) \int_0^t \mathcal{I}(s)^{-1}\beta(s)\, ds,
\end{equation*}
where
\begin{equation*}
	\mathcal{I}(t)=\exp\left(\int_0^t\alpha(s)\, ds\right) \geq 1.
\end{equation*}
\end{lem}

Applying the inequality to our problem, we have
\begin{equation*}
	\abs{\mu(t)}^2\leq \mathcal{I}(t)\int_0^t \abs{F_2(s)}^2 + h(s)\, ds.
\end{equation*}
In particular, recalling that $h$ is supported in $(0, t_0)$ and continuing until the end of the time interval $(0, t_f)$,
\begin{equation*}
	\norm{\mu}_{C^0}^2\leq \mathcal{I}(t_f)\left(\norm{F_2}_{L^2}^2 + \int_0^{t_0}h(s)\, ds\right),
\end{equation*}
where the norms $C^0$ and $L^2$ are over the entirety of $(0,t_f)$. Integrating the prefactor of $h(t)$ over its support yields
\begin{equation*}
	\int_0^{t_0}2t\left(1-\frac{t}{t_0}\right) dt = \frac{t_0^2}{3},
\end{equation*}
so we obtain
\begin{equation}\label{gronwalloutput}
	\norm{\mu}_{C^0}^2\leq \mathcal{I}(t_f)\left(\norm{F_2}_{L^2}^2 + \frac{t_0^2}{3}\cdot 2\lambda B^2(C_l^2\norm{\mu}_{C^0}^2 + \norm{F_2}_{C^0}^2)\right).
\end{equation}
An explicit computation yields
\begin{equation*}
	\mathcal{I}(t_f)=\exp((2C_l+1)t_f + 2\lambda)\left(\frac{t_f}{t_0}\right)^{2\lambda}.
\end{equation*}
The prefactor of $\norm{\mu}_{C^0}^2$ on the right-hand side of (\ref{gronwalloutput}) is then
\begin{equation*}
	\frac{2}{3}e^{(2C_l+1)t_f + 2\lambda}\, t_f^{2\lambda}\lambda B^2 C_l^2 t_0^{2-2\lambda}.
\end{equation*}
We set this equal to $\frac{1}{2}$, and choose $t_0$ to be the corresponding value, namely
\begin{equation*}
	t_0 = \left(\frac{\sqrt{3}}{2} \frac{e^{-(C_l+1/2)t_f-\lambda}}{t_f^\lambda BC_l\sqrt{\lambda}}\right)^{1/(1-\lambda)}.
\end{equation*}
Note that this is only possible if $\lambda<1$, or equivalently
\begin{equation}\label{hconstraint}
	h\in (2-\sqrt{3}, 2+\sqrt{3}).
\end{equation}
We consequently have the estimate
\begin{equation*}
	\frac{1}{2}\norm{\mu}_{C^0}^2 \leq \mathcal{I}(t_f)\left(\norm{F_2}_{L^2}^2 + \frac{2}{3}t_0^2B^2\norm{F_2}_{C^0}^2\right).
\end{equation*}
To make the following steps simpler, we loosen the estimate slightly to
\begin{equation*}
	\norm{\mu}_{C^0}\leq \sqrt{\mathcal{I}(t_f)}\left(\sqrt{2}\norm{F_2}_{L^2} + \frac{2}{\sqrt{3}}t_0B\norm{F_2}_{C^0}\right).
\end{equation*}
We will eventually estimate $\norm{F_2}_{C^0}$ directly, so we can further weaken the above to
\begin{equation*}
	\norm{\mu}_{C^0}\leq \sqrt{\mathcal{I}(t_f)}\left(\sqrt{2t_f} + \frac{2}{\sqrt{3}}t_0B\right)\norm{F_2}_{C^0}.
\end{equation*}
We summarise the results as follows. Let $\mathcal{S}_2 : C^0((0,t_f)) \rightarrow C^0((0,t_f))$ be the linear solution map defined by 
\begin{equation}\label{S2dfn}
	\mu = \mathcal{S}_2 F_2,
\end{equation}
where $\mu$ is the unique solution to the linear equation (\ref{inhomog_linear}), which we assume \textit{a priori} to exist up until time $t_f$. Then, we have shown the following.

\begin{lem}
	The operator $\mathcal{S}_2$ defined by (\ref{S2dfn}) is bounded with norm at most
	\begin{equation*}
		\sqrt{\mathcal{I}(t_f)}\left(\sqrt{2t_f} + \frac{2}{\sqrt{3}}t_0B\right).
	\end{equation*}
\end{lem}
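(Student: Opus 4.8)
The plan is to run an energy estimate on the solution $\mu$ of the linear inhomogeneous problem (\ref{inhomog_linear}) and close it with the version of Grönwall's inequality recalled just above. Setting $y(t)=\abs{\mu(t)}^2$ and differentiating,
\[
\dot y = \frac{2}{t}\mu^t L\mu + 2\mu^t M_l(t,\hat\eta)\mu + 2\mu\cdot F_2(t),
\]
and the aim is to dominate the right-hand side by $\alpha(t)y(t)+\beta(t)$ with $\alpha$ integrable on $(0,t_f)$, so that $y$, hence $\norm{\mu}_{C^0}$, is controlled by $\beta$, which itself is controlled by $\norm{F_2}_{C^0}$ via the bound (\ref{Cldfn}) on $M_l$ together with $2\mu\cdot F_2\leq\abs{\mu}^2+\abs{F_2}^2$.

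The one delicate point is the singular term $\tfrac{2}{t}\mu^t L\mu$: since $L$ is neither symmetric nor negative semidefinite it cannot be discarded as in the conformal-infinity argument. Instead I symmetrize, $\mu^t L\mu=\mu^t L_{sym}\mu$, and invoke the standing assumption $h>\tfrac12\sqrt{31/23}$, under which Descartes' rule of signs applied to $p_{sym}$ shows $L_{sym}$ has exactly one positive eigenvalue $\lambda=(h-1)^2/(2h)$, so $\tfrac{2}{t}\mu^t L\mu\leq\tfrac{2\lambda}{t}\abs{\mu}^2$. To handle the non-integrable coefficient $2\lambda/t$, I split $(0,t_f)$ at a point $t_0$ to be chosen, replace $2\lambda/t$ by the bounded function $r(t)$ equal to $2\lambda/t_0$ on $(0,t_0)$ and $2\lambda/t$ on $[t_0,t_f)$, and push the difference $\tfrac{2\lambda}{t}-\tfrac{2\lambda}{t_0}=2\lambda(1-t/t_0)/t$ into an inhomogeneous term $h(t)$ supported on $(0,t_0)$, using Lemma \ref{C1lemma} to bound $\abs{\mu(t)/t}$ there by $B(C_l\norm{\mu}_{C^0}+\norm{F_2}_{C^0})$ and $(a+b)^2\leq 2a^2+2b^2$. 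This converts the estimate into $\dot y\leq(r(t)+2C_l+1)y+\abs{F_2}^2+h(t)$ with everything on the right now integrable.

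Applying the Grönwall lemma then yields $\norm{\mu}_{C^0}^2\leq\mathcal{I}(t_f)\bigl(\norm{F_2}_{L^2}^2+\int_0^{t_0}h(s)\,ds\bigr)$ with $\mathcal{I}(t_f)=\exp((2C_l+1)t_f+2\lambda)(t_f/t_0)^{2\lambda}$, which is (\ref{gronwalloutput}). The right-hand side still carries $\norm{\mu}_{C^0}^2$ through $h$; its coefficient equals $\tfrac23 e^{(2C_l+1)t_f+2\lambda}t_f^{2\lambda}\lambda B^2 C_l^2 t_0^{2-2\lambda}$, and I set this equal to $\tfrac12$ and solve for $t_0$ — legitimate precisely because the exponent $2-2\lambda$ is positive, i.e. $\lambda<1$, equivalently $h\in(2-\sqrt3,2+\sqrt3)$ as in (\ref{hconstraint}). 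Absorbing $\tfrac12\norm{\mu}_{C^0}^2$ to the left, taking square roots, and crudely bounding $\norm{F_2}_{L^2}\leq\sqrt{t_f}\norm{F_2}_{C^0}$ gives $\norm{\mu}_{C^0}\leq\sqrt{\mathcal{I}(t_f)}(\sqrt{2t_f}+\tfrac{2}{\sqrt3}t_0 B)\norm{F_2}_{C^0}$, which is exactly the asserted operator-norm bound on $\mathcal{S}_2$.

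The main obstacle is the interplay between the singular term and the self-referential structure of the estimate: the $C^1$-type bound of Lemma \ref{C1lemma} needed to neutralize the $1/t$ singularity reintroduces $\norm{\mu}_{C^0}$ on the right, and the scheme only closes because the resulting coefficient carries the positive power $t_0^{2-2\lambda}$, which is what forces the spectral constraint $\lambda<1$. Beyond that it is bookkeeping: checking $2C_l+1\geq 0$ (automatic, ensuring $\mathcal{I}\geq 1$) and the elementary integral $\int_0^{t_0}2t(1-t/t_0)\,dt=t_0^2/3$ used in passing from $h$ to the $t_0^2/3$ factor.
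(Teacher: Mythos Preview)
Your proposal is correct and follows essentially the same approach as the paper: the energy estimate on $\abs{\mu}^2$, the symmetrization $\mu^tL\mu=\mu^tL_{sym}\mu\leq\lambda\abs{\mu}^2$, the splitting at $t_0$ into $r(t)$ and the compactly supported $h(t)$ controlled via Lemma~\ref{C1lemma}, the Grönwall step, and the bootstrap choice of $t_0$ exploiting $\lambda<1$ are exactly the paper's argument. The only cosmetic difference is that the paper first records the intermediate inequality with $\norm{F_2}_{L^2}$ before weakening to $\norm{F_2}_{C^0}$, whereas you state that passage in one line.
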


\subsection{Fixed-point theorem and proof of existence}

Consider now a true solution $\hat\eta + \mu$ of the Einstein equation corresponding to the initial data $h, b_1$. Then, it satisfies $\mu(0)=0$ and
\begin{equation}\label{mutrueequation}
	\dot\mu=\frac{1}{t}L\mu + M_l(t,\hat\eta)\cdot\mu + M_{nl}(t,\hat\eta,\mu) + \hat{E}_2.
\end{equation}
Correspondingly, on the interval $(0,t_f)$, we have
\begin{equation*}
	\mu = \mathcal{S}_2 (\hat{E}_2 + M_{nl}(t,\hat\eta, \mu)).
\end{equation*}
Suppose that $\norm{\hat E_2}_{C^0}<\varepsilon$. Then, we get using (\ref{Cnldfn}) the estimate
\begin{equation*}
	\norm{\mu}_{C^0}\leq \sqrt{\mathcal{I}(t_f)}\left(\sqrt{2t_f}+\frac{2}{\sqrt{3}}t_0B\right)\left(\varepsilon + C_{nl}\norm{\mu}_{C^0}^2\right).
\end{equation*}
We can consequently apply a version of the Schauder fixed point theorem to construct a solution $\mu$. More specifically, we will use the following special case of \cite[Theorem 2]{BH26}.
\begin{thm}\label{fixedpointtheorem}
	Let $X$ be a Banach space and $Q: X\rightarrow X$ be a map satisfying $\norm{Q(x)}\leq q\norm{x}^2$ whenever $\norm{x}\leq s_0$. If $\norm{x_0} + qs^2<s$ for some $x_0\in X$ and $0<s\leq s_0$, and $Q$ is completely continuous, then the fixed point equation
	\begin{equation*}
		x=x_0 + Q(x)
	\end{equation*}
	has a solution $x^*\in X$ satisfying $\norm{x^*}\leq s$.
\end{thm}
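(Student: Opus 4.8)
The plan is to apply Schauder's fixed point theorem to the map $T: X\rightarrow X$ defined by $T(x) = x_0 + Q(x)$, restricted to the closed ball $K = \{x\in X : \norm{x}\le s\}$. First I would record that $K$ is a nonempty, bounded, closed, convex subset of the Banach space $X$, hence an admissible domain for Schauder's theorem.

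The one genuine computation is that $T$ maps $K$ into itself. For $x\in K$ we have $\norm{x}\le s\le s_0$, so the growth hypothesis on $Q$ applies and yields
\begin{equation*}
	\norm{T(x)} \le \norm{x_0} + \norm{Q(x)} \le \norm{x_0} + q\norm{x}^2 \le \norm{x_0} + qs^2 < s,
\end{equation*}
where the final strict inequality is precisely the assumption $\norm{x_0} + qs^2 < s$. Hence $T(K)\subseteq K$ (in fact $T(K)$ lies in the open ball of radius $s$).

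It then remains to verify the topological hypotheses. Since $Q$ is completely continuous it is in particular continuous, so $T = x_0 + Q$ is continuous; and since $Q$ carries the bounded set $K$ to a relatively compact subset of $X$, so does $T$, because translation by the fixed vector $x_0$ is a homeomorphism of $X$ and preserves relative compactness. Schauder's fixed point theorem therefore produces a point $x^*\in K$ with $x^* = T(x^*) = x_0 + Q(x^*)$, and membership in $K$ says exactly that $\norm{x^*}\le s$, which is the claim.

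I do not anticipate any serious obstacle: the statement is a standard consequence of Schauder's theorem, and the argument in \cite{BH24} proceeds in the same way. The only point deserving a word of care is the meaning of ``completely continuous'', which I would take in the sense that $Q$ sends bounded sets to relatively compact sets (equivalently, $Q|_K$ is a compact operator); this is exactly what is needed to invoke Schauder, whereas mere continuity would suffice only in finite dimensions (via Brouwer), and the applications in \S\ref{fixedpointsection} are genuinely infinite-dimensional.
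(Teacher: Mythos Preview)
Your proof is correct: the paper does not actually prove this statement but merely cites it as a special case of \cite[Theorem 2]{BH24}, describing it as ``a version of the Schauder fixed point theorem''. Your Schauder-based argument on the closed ball of radius $s$ is exactly the standard route and matches what the paper alludes to.
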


To apply the above theorem in our context, we work in $X = C^0((0, t_f))^6$, the space of bounded continuous functions $(0,t_f)\rightarrow \mathbb{R}^6$, which is a Banach space. We also have $x=\mu$ and $x_0=\mathcal{S}_2\hat{E}_2$. Consequently, the above estimates give
\begin{equation*}
	\norm{x_0}\leq \sqrt{\mathcal{I}(t_f)}\left(\sqrt{2t_f} + \frac{2}{\sqrt{3}}t_0B\right)\varepsilon,
\end{equation*}
Similarly, we have $Q(x) = \mathcal{S}_2 M_{nl}$, giving the estimate
\begin{equation*}
	\norm{Q(x)}\leq q\norm{\mu}^2,
\end{equation*}
where
\begin{equation*}
	q=\sqrt{\mathcal{I}(t_f)}\left(\sqrt{2t_f}+\frac{2}{\sqrt{3}}t_0B\right)C_{nl}.
\end{equation*}
The existence of an $s$ such that $\norm{x_0}+qs^2<s$ is equivalent to
\begin{equation*}
	\norm{x_0}<\frac{1}{4q},
\end{equation*}
so we can choose $s$ to be equal to $1/(2q)$. We need then that
\begin{equation*}
	\varepsilon<\frac{1}{4\mathcal{I}(t_f)C_{nl}(\sqrt{2t_f}+\frac{2}{\sqrt{3}}t_0B)^2}\equiv \varepsilon_0.
\end{equation*}
Consequently, we have shown the following.

\begin{lem}\label{existencelemma}
	Suppose that there exists a smooth approximate solution $\hat\eta(t)$ on $[0,t_f]$ satisfying
	\begin{enumerate}
		\item $\hat\eta(0)=0$,\label{hyp1}
		\item $\abs{M_l(t,\hat\eta)}\leq C_l$ for each $t\in [0, t_f]$,\label{hyp2}
		\item $\abs{M_{nl}(t, \hat\eta, \mu)}<C_{nl}\abs{\mu(t)}^2$ for each $t\in[0,t_f]$,\label{hyp3}
		\item $\norm{\hat E_2}_{C^0}<\varepsilon_0$, where $\hat{E}_2(t) = \frac{1}{t}L\hat{\eta}(t) + M(t, \hat\eta) - \dot{\hat\eta}(t)$, $\varepsilon_0=(4\mathcal{I}(t_f)C_{nl}(\sqrt{2t_f}+\frac{2}{\sqrt{3}}t_0B)^2)^{-1}$ and $\mathcal{I}(t_f)$, $t_0$, $B$ are given as above.\label{hyp4}
	\end{enumerate}
	Then, there exists a solution $\mu$ to the fixed point problem $\mu = \mathcal{S}_2(\hat{E}_2 + M_{nl}(t,\hat\eta, \mu))$, where $\mathcal{S}_2$ is given by (\ref{S2dfn}), and consequently $\eta = \mu + \hat\eta$ is an Einstein metric on $M$ defined for $t\in [0,t_f]$. Furthermore, 
	\begin{equation}\label{mutfbounds}
		\abs{\mu(t)}\leq \frac{1}{2\sqrt{\mathcal{I}(t_f)}\left(\sqrt{2t_f}+\frac{2}{\sqrt{3}}t_0B\right)C_{nl}}
	\end{equation}
	for each $t$, in particular for $t=t_f$.
\end{lem}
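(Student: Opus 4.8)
The plan is to collect the estimates proved so far in this section and feed them into the abstract fixed-point statement, Theorem~\ref{fixedpointtheorem}, taking as Banach space $X = C^0((0,t_f))$. First I would recall the reformulation: if $\eta = \hat\eta + \mu$ is a genuine solution of the system (\ref{etaODEslope2}) with $\eta(0) = 0$ and initial data $(h,b_1)$, then $\mu$ satisfies $\mu(0) = 0$ together with (\ref{mutrueequation}), and applying the linear solution operator $\mathcal{S}_2$ of (\ref{inhomog_linear}) --- which is available up to $t_f$ by construction and outputs functions vanishing at $t = 0$ --- this is equivalent to the fixed-point equation $\mu = \mathcal{S}_2(\hat E_2 + M_{nl}(t,\hat\eta,\mu))$ on $(0,t_f)$. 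Conversely, any $C^0$ solution of the latter automatically has $\mu(0) = 0$, so $\eta := \hat\eta + \mu$ solves (\ref{etaODEslope2}) with $\eta(0) = 0$; bootstrapping the ODE away from $t = 0$ and invoking the uniqueness part of \cite[Theorem 4.7]{FH17} at $t = 0$ shows $\eta$ is smooth on $[0,t_f]$ and hence, by the discussion of \S\ref{ivpsection}, corresponds to an Einstein metric on $M$ with $b(0)=c(0)=h$ and $\dot b(0) = -\dot c(0) = b_1$. Thus it suffices to produce a fixed point with the claimed norm bound.

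Next I would unpack the abstract data. Write $N := \sqrt{\mathcal{I}(t_f)}\,(\sqrt{2t_f} + \tfrac{2}{\sqrt3}\,t_0 B)$ for the operator-norm bound on $\mathcal{S}_2$ established in the preceding lemma, and set $x_0 := \mathcal{S}_2\hat E_2$ and $Q(x) := \mathcal{S}_2\,M_{nl}(t,\hat\eta,x)$, so that the fixed-point equation reads $x = x_0 + Q(x)$. Hypothesis (\ref{hyp3}) and the bound on $\mathcal{S}_2$ give $\norm{Q(x)} \leq q\norm{x}^2$ for all $x \in X$, with $q := N C_{nl}$; hypothesis (\ref{hyp4}), together with the identity $\varepsilon_0 = 1/(4N^2 C_{nl})$, gives $\norm{x_0} \leq N\norm{\hat E_2}_{C^0} < N\varepsilon_0 = 1/(4q)$. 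Since $\norm{x_0} < 1/(4q)$, the inequality $\norm{x_0} + q s^2 < s$ holds for $s = 1/(2q)$, so in Theorem~\ref{fixedpointtheorem} I would take $s = s_0 = 1/(2q)$.

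The one genuinely analytic point --- and the main obstacle --- is verifying that $Q$ is completely continuous. The nonlinear substitution $M_{nl}(t,\hat\eta,\cdot)\colon C^0 \to C^0$ is continuous and sends bounded sets to bounded sets, being assembled from the smooth (polynomial and rational) nonlinearities of $M$ minus their linearisation, with the relevant bound furnished by hypothesis (\ref{hyp3}) on a ball. The operator $\mathcal{S}_2$ is compact: writing $\mathcal{S}_2 F = \mathcal{L}\bigl(M_l(t,\hat\eta)\,\mathcal{S}_2 F + F\bigr)$ and applying the $C^1$-type estimate of Lemma~\ref{C1lemma} together with the operator-norm bound on $\mathcal{S}_2$ and hypothesis (\ref{hyp2}) bounds $\abs{(\mathcal{S}_2 F)(t)/t}$ uniformly over $F$ in a bounded subset of $C^0$; consequently $\abs{\tfrac1t L\,(\mathcal{S}_2 F)(t)}$, and hence $\tfrac{d}{dt}(\mathcal{S}_2 F)$, is uniformly bounded, so the image of a bounded set under $\mathcal{S}_2$ is bounded and equicontinuous and therefore precompact in $C^0$ by Arzel\`a--Ascoli. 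Hence $Q = \mathcal{S}_2 \circ M_{nl}(t,\hat\eta,\cdot)$ is completely continuous. This is precisely the step where the $C^1$ estimate of Lemma~\ref{C1lemma} is indispensable: it is what tames the singular term $\tfrac1t L\mu$ near $t = 0$ so that Arzel\`a--Ascoli applies.

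Finally I would invoke Theorem~\ref{fixedpointtheorem}, which produces a fixed point $\mu = x^* \in C^0((0,t_f))$ with $\norm{\mu}_{C^0} \leq s = 1/(2q) = 1/(2N C_{nl})$, and this unwinds exactly to the bound (\ref{mutfbounds}), valid for every $t \in (0,t_f)$ and in particular at $t = t_f$. By the reformulation of the first paragraph, $\eta = \hat\eta + \mu$ is then the desired smooth Einstein metric on $M$ for $t \in [0,t_f]$, which completes the proof. Apart from the compactness argument, everything here is bookkeeping with constants that have already been computed in the preceding subsections.
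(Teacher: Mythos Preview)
Your proposal is correct and follows essentially the same approach as the paper: set $x_0=\mathcal{S}_2\hat E_2$, $Q(x)=\mathcal{S}_2 M_{nl}$, read off $q=\sqrt{\mathcal{I}(t_f)}(\sqrt{2t_f}+\tfrac{2}{\sqrt3}t_0B)C_{nl}$ from the operator bound on $\mathcal{S}_2$ and hypothesis~(\ref{hyp3}), check $\norm{x_0}<1/(4q)$ via hypothesis~(\ref{hyp4}), and apply Theorem~\ref{fixedpointtheorem} with $s=1/(2q)$. The paper does not spell out the complete continuity of $Q$ or the bootstrapping of regularity; your Arzel\`a--Ascoli argument via Lemma~\ref{C1lemma} (bounding $\mu(t)/t$ and hence $\tfrac1t L\mu$ and $\dot\mu$) is a correct and welcome filling-in of that gap.
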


\section{Computer-assisted construction}\label{computersection}

In this section, we construct the smooth function $\hat\eta$ that satisfies the hypotheses of Lemma \ref{existencelemma} and can be perturbed to a metric at $t_f$ satisfying the hypotheses of Lemma \ref{infinitylemma}, using a procedure similar to that used in \cite{BH26}, which goes as follows.
\begin{enumerate}
	\item Choosing a point $(h, b_1)$ of Figure \ref{b1hplot} where there is numerical evidence of a complete Einstein metric, construct a heuristic solution up to arbitrary precision, with no guarantees of accuracy. This is done using power series methods, in particular using Frobenius' method near the singular orbit, as discrete-time iterative schemes such as Runge--Kutta methods are unable to reach the required levels of precision. \label{heuristicstep}
	\item Approximate the heuristic solution of step (\ref{heuristicstep}) using Chebyshev polynomials satisfying hypothesis (\ref{hyp1}) of Lemma \ref{existencelemma} whose coefficients are rigorously controlled with interval arithmetic. \label{chebyshevstep}
	\item Writing $M_l$ and $M_{nl}$ as rational functions in terms of the Chebyshev polynomial approximations of step \ref{chebyshevstep}, obtain rigorous bounds $C_l$, $C_{nl}$ satisfying hypotheses (\ref{hyp2}) and (\ref{hyp3}) of Lemma \ref{existencelemma} using Sturm's theorem. \label{MlMnlstep}
	\item Compute $\varepsilon_0$ using interval arithmetic from $C_l$, $C_{nl}$ and $t_f$, and compare to a good estimate $\varepsilon$ of $\norm{\hat E_2}_{C^0}$ as a function of the Chebyshev approximation, obtained using Sturm's theorem. If $\varepsilon<\varepsilon_0$, then hypothesis (\ref{hyp4}) holds and so does the conclusion of Lemma \ref{existencelemma}. We consequently have a smooth Einstein metric $\eta$ on $M$ from the singular orbit to $t_f$. \label{fixedpointstep}
	\item Letting $s_0 = 1-\tanh(t_f/2)$, find constants $A, B, C, D$ such that the hypotheses of Lemma \ref{infinitylemma} hold for the initial data $Z(s_0)$ corresponding to the solution $\eta$ of step (\ref{fixedpointstep}). The lemma then gives us that $Z$ extends to $s=0$, or equivalently $\eta$ extends to $t\to \infty$, giving us a complete Einstein metric on $M$. By the well-posedness of the ODEs and the continuity of the arguments in \S \ref{infinitysection}, we also conclude that there is a (2-dimensional) neighbourhood of $(h, b_1)$ which is in correspondence with distinct complete Einstein metrics. \label{finalstep}
\end{enumerate}

We may choose essentially any point in the dark shaded region of Figure \ref{b1hplot} satisfying (\ref{hconstraint}). For convenience, we will pick
\begin{equation*}
	h=1.5, \quad b_1 = 0.1.
\end{equation*}
The specific values of the various constants and estimates used in steps (\ref{MlMnlstep})-(\ref{finalstep}) are listed in appendix \ref{valuesapp}. We perform rigorous numerical computations using arbitrary precision interval arithmetic via the software package \texttt{arb} \cite{J17}. All implementations were done in Python, and are available at \url{https://github.com/Qiu-Shi-Wang/SU2-Einstein}.
\subsection{Formal power series solutions}
Near $t=0$, we must use Frobenius' method. Consider the singular initial value problem
\begin{equation*}
	\dot\eta = \frac{1}{t}L\eta + M(t, \eta), \qquad \eta(0)=0,
\end{equation*}
for an analytic function $M$.\footnote{Although it is not immediately apparent, for each $t\geq 0$, $M(t, \cdot)$ is in fact analytic near the approximate solution $\hat\eta(t)$. This can be seen in the expressions for $\frac{\partial M_i}{\partial \eta_j}$ at the end of \S\ref{jacobiansubsection}. Hence, we can indeed use power series methods. Indeed, we know \textit{a priori} \cite[Theorem 5.26]{B87} that solutions to the Einstein equation are analytic, as a result of its ellipticity.} Expand the solution near $t=0$ as the series $\eta = \sum_{i=1}a_it^i$. Then
\begin{equation*}
	\dot\eta = \sum_{i=0} a_{i+1}(i+1) t^i, \qquad \frac{1}{t}L\eta = \sum_{i=0}La_{i+1}t^i,
\end{equation*}
and we consequently get the recurrence relation
\begin{equation*}
	\sum_{i=0}\left((i+1)\mathrm{id}_6 + L\right) a_{i+1}t^i = M(t, \eta).
\end{equation*}
Expanding $M(t, \eta) = \sum_{i=0} b_i t^i$, where we note that $b_i$ depends only on $a_0, \dots, a_i$, we obtain the iterative formula
\begin{equation*}
	a_{i+1} = \left((i+1)\mathrm{id}_6 - L\right)^{-1}b_i.
\end{equation*}

Away from $t=0$, we can centre the power series around some other time $t_0>0$ and obtain a similar recurrence relation. More specifically, write $\tau = t-t_0$, and let
\begin{equation*}
	\eta(\tau + t_0) = \sum_{i=0} a_i \tau^i,
\end{equation*}
where we obtain the value $a_0 = \eta(t_0)$ from evaluation of the previous power series. Let
\begin{equation*}
	M(t, \eta) = M(\tau + t_0, \eta) = \sum_{i=0} b_i \tau^i.
\end{equation*}
The Einstein equation is then given by
\begin{equation*}
	\sum_{i=1} a_i i \tau^{i-1} = \frac{1}{\tau+t_0}\sum_{i=0} La_i \tau^i + \sum_{i=0} b_i \tau^i.
\end{equation*}
We obtain from it the iterative formula
\begin{equation*}
	a_{i+1} = \frac{1}{i+1}\left(\frac{1}{t_0}\gamma_i + b_i\right),
\end{equation*}
where
\begin{equation*}
	\gamma_i = \sum_{l=0}^i \alpha_l \beta_{i-l}, \qquad \alpha_i = La_i, \qquad \beta_i = \left(-\frac{1}{t_0}\right)^i.
\end{equation*}

We will estimate the radius of convergence of each formal power series using the root test. More specifically, given a (vector-valued) series $\sum_k c_k t^k$, the radius of convergence $r$ is given by $r^{-1} = \limsup_{k\to\infty} \norm{c_k}^{1/k}$. We estimate this quantity by taking the maximum of the 10 ``final terms'' of the lim sup. More specifically, given a formal power series computed up to $N$ terms, we approximate its radius of convergence by
\begin{equation*}
	\hat r = \left(\max_{i=0, ..., 9} \|c_{N-i}\|^{\frac{1}{N-i}}\right)^{-1}.
\end{equation*}

Each power series is then evaluated at $\hat r/2$ past its centre, at which point another formal power series solution is computed. This procedure is repeated until the final time $t_f$ is reached, resulting in a heuristic approximate solution $\tilde\eta(t)$ for $t\in [0, t_f]$.

\subsection{Chebyshev polynomial approximations}
We will recall that the \textit{Chebyshev polynomials of the first kind} $T_n$, for $n\in \mathbb{N}$, are defined on $[-1,1]$ through the identity
\begin{equation*}
	T_n(\cos\theta) = \cos(n\theta).
\end{equation*}
$T_n(x)$ has zeroes $\{x_k\}$ for $k=1, \dots, n$, where 
\begin{equation*}
	x_k = \cos\left(\frac{\pi(k-\frac{1}{2})}{n}\right).
\end{equation*}
We compute the heuristic derivative of $\tilde\eta(t)$ using the right-hand side:
\begin{equation*}
	\tilde\eta_D(t)\equiv \frac{1}{t}L\tilde\eta + M(t, \tilde\eta).
\end{equation*}
Consider a function $f$ on $[-1,1]$. If we let
\begin{align*}
	c_j &= \frac{2}{N}\sum_{k=1}^N f(x_k)T_j(x_k)\\
	&= \frac{2}{N}\sum_{k=1}^N f\left(\cos\left(\frac{\pi\left(k-\frac{1}{2}\right)}{N}\right)\right)\cos\left(\frac{\pi j \left(k-\frac{1}{2}\right)}{N}\right),
\end{align*}
then we may approximate $f$ by a degree $N-1$ polynomial via \cite[Theorem 6.7]{MH03}
\begin{equation*}
	f(x)\approx \sum_{j=0}^{N-1} c_j T_j(x) - \frac{1}{2}c_0.
\end{equation*}
The approximation is exact at the $N$ zeroes of $T_N(x)$, which follows from the fact that the $T_j$'s satisfy on the zero set $\{x_k\}$ of $T_{n+1}$ the discrete orthogonality property \cite[\S 4.6]{MH03}
\begin{equation*}
	\sum_{k=1}^{n+1}T_i(x_k)T_j(x_k) = \begin{cases}
	0 & \text{if $i\neq j$ ($\leq n$)}\\
	n+1 & \text{if $i=j=0$}\\
	\frac{1}{2}(n+1) & \text{if $0<i=j\leq n$.}\end{cases}
\end{equation*}

We approximate $\tilde\eta_D$ by a sum of Chebyshev polynomials $\hat\eta_D(t)$ in exactly this manner, after rescaling the $T_j$'s to be defined on $[0,t_f]$ rather than $[-1,1]$.

We can then integrate $\hat\eta_D(t)$ starting from $t=0$, using the identity
\begin{equation*}
	\int T_n\, dt = \frac{1}{2(n+1)}T_{n+1} - \frac{1}{2(n-1)}T_{n-1},
\end{equation*}
to obtain an approximate solution $\hat\eta(t)$ satisfying $\hat\eta(0)=0$ (and thus hypothesis (\ref{hyp1}) of Lemma \ref{existencelemma}) as a sum of Chebyshev polynomials.

\subsection{Estimates on the linear and nonlinear terms}
Using the expressions in appendix \ref{MlMnlsection}, we note that $M_l(t, \hat\eta)$ and $M_{nl}(t,\hat\eta, \mu)$ can be written as rational functions of $\eta$. Using the identity
\begin{equation*}
	T_m T_n = \frac{1}{2}\left(T_{m+n} - T_{\abs{m-n}}\right),
\end{equation*}
we can write each term of $M_l$ and $M_{nl}$ as a rational function of finite sums of Chebyshev polynomials with coefficients controlled by interval arithmetic. They can in turn be converted into a quotient of polynomials $P(x)/Q(x)$ in the usual monomial basis.

We note that $\abs{P(x)/Q(x)}<\varepsilon$ for all $x\in [a,b]$ if:
\begin{itemize}
	\item There exists an $x_0\in (a,b)$ such that $\abs{P(x_0)/Q(x_0)}<\varepsilon$,
	\item $P-\varepsilon Q$ and $P+\varepsilon Q$ have no zeroes in $[a,b]$.
\end{itemize}
In order to prove that a polynomial with coefficients given in interval arithmetic form has no zeroes, we use Sturm's theorem:
\begin{thm}
	Let $P(x)$ be a polynomial with real coefficients. Define the \textit{Sturm sequence} $P_i$, $i\geq 0$, by
	\begin{equation*}
		P_0 = P,\qquad P_1 = P', \qquad P_{i+1}=-\mathrm{rem}(P_{i-1},P_i),
	\end{equation*}
	where $\mathrm{rem}$ is the remainder of polynomial division. There are at most $\deg P$ nonzero terms. For any $\xi\in\mathbb{R}$, consider the sequence
	\begin{equation*}
		P_0(\xi), P_1(\xi), P_2(\xi), \dots.
	\end{equation*}
	We will denote the number of \textit{sign variations} in this sequence, disregarding zeroes, by $V(\xi)$.
	Then, $V(a) - V(b)$ is the number of distinct real roots of $P$ in $(a,b]$.
\end{thm}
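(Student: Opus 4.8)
The plan is to follow the classical argument: track how the sign-variation count $V(\xi)$ evolves as $\xi$ sweeps along the real axis, and show that the only contributions to $V(a)-V(b)$ come from roots of $P$ itself, each of which lowers $V$ by exactly one.

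First I would record the structural facts. Since $\deg P_{i+1}<\deg P_i$ whenever $P_i\neq 0$, the sequence terminates at some $P_m\neq 0$ with $P_{m+1}=0$; comparing the recurrence with the Euclidean algorithm applied to $(P,P')$ shows $P_m$ is a nonzero scalar multiple of $\gcd(P,P')$. I would dispose of the general case by a reduction: $P/\gcd(P,P')$ is squarefree with the same distinct real roots, its Sturm sequence is obtained by dividing every term of that of $P$ by $\gcd(P,P')$, and this division preserves sign variations at any point that is not a root of $P$. Hence it suffices to treat squarefree $P$ with $a,b$ not roots of $P$, in which case $P_m$ is a nonzero constant. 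Next, writing the recurrence as $P_{i-1}=Q_iP_i-P_{i+1}$, one shows by downward and upward induction that no two consecutive terms $P_i,P_{i+1}$ share a real root (otherwise all $P_j$, including the nonzero constant $P_m$, would vanish there). Consequently, at a zero $\xi$ of an interior term $P_i$ with $0<i<m$, the neighbours satisfy $P_{i-1}(\xi)=-P_{i+1}(\xi)$ and are nonzero of opposite sign; and a zero of $P_0=P$ is never a zero of $P_1=P'$, by squarefreeness.

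With these in hand, the heart of the proof is a local analysis near each root $\xi_0$ of $\prod_i P_i$. On a small interval around $\xi_0$ containing no other root of that product, the indices $i$ with $P_i(\xi_0)=0$ are pairwise non-consecutive, so their effects on $V$ decouple across the overlapping windows $(P_{i-1},P_i,P_{i+1})$. For an interior such index, the two neighbours keep fixed opposite signs throughout, so that window contributes exactly one sign variation on each side of $\xi_0$ regardless of the sign of $P_i$ there: no net change. For the index $0$, when $P(\xi_0)=0$, the term $P'$ keeps a fixed sign while $P$ changes sign across $\xi_0$, so the pair $(P,P')$ passes from one variation to none and $V$ drops by exactly one; the terminal constant $P_m$ never contributes. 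Summing these jumps over $(a,b]$ and using that $V$ is locally constant away from roots of $\prod_i P_i$ gives $V(a)-V(b)=\#\{\xi_0\in(a,b]:P(\xi_0)=0\}$, and undoing the reduction of the first step yields the same count for the original $P$.

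I expect the main obstacle to be precisely this sign bookkeeping in the local analysis: one must check, carefully and in every sign configuration, that a zero of an interior Sturm term produces no net change in $V$ while a zero of $P$ always decreases it by exactly one, which hinges on the ``opposite signs of the neighbours'' property and on $P'(\xi_0)\neq 0$. A secondary technical point is justifying the reduction to the squarefree case, namely that the Sturm sequence of $P/\gcd(P,P')$ really is the Sturm sequence of $P$ divided through by $\gcd(P,P')$, and that rescaling by a polynomial which is nonvanishing off the roots of $P$ does not alter sign variations there.
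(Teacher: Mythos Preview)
The paper does not prove this theorem at all: Sturm's theorem is simply quoted as a classical tool used in the computer-assisted estimates of \S6, with no argument given. Your proposal is the standard textbook proof and is essentially correct.

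One minor point worth flagging in your reduction step: the sequence $P_i/\gcd(P,P')$ is not literally the Sturm sequence of $P/\gcd(P,P')$ as you define it, since $P'/\gcd(P,P')$ is generally not the derivative of $P/\gcd(P,P')$. What is true is that the divided sequence is a \emph{generalized Sturm chain} (it satisfies the same recurrence $P_{i-1}=Q_iP_i-P_{i+1}$, its last term is a nonzero constant, and at a root of the leading term the pair $(\tilde P_0,\tilde P_1)$ still passes from one variation to none), and that is all your local analysis actually uses. You already identify this as a technical point to justify; just be aware the justification is slightly different from what you wrote.
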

We use binary search to determine reasonably sharp bounds $\varepsilon$ on each relevant expression. This procedure produces rigorous bounds $C_l$ and $C_{nl}$ satisfying hypotheses (\ref{hyp2}) and (\ref{hyp3}) of Lemma \ref{existencelemma} respectively. It is worth noting that the $\mathcal{O}(\mu^3)$ terms of $M_{nl}$ can be disregarded, at the cost of a negligible fractional increase in $C_{nl}$, as the perturbation $\mu$ will be very small and consequently all terms will be dominated by the leading order terms.

\textit{Remark. } To reduce the numerical precision required in the Euclidean divisions, similarly to \cite{BH26}, we rescale the variable $x$ of $P(x)$ to $y=x/\rho$, and consider the sign variations of $\tilde P(y) = \tilde P(x/\rho) = P(x)$ in $(\rho a, \rho b]$. Practically, we notice that $\rho=0.5$ is a convenient choice for our problem.

\subsection{Smooth metric up until $t_f$}

Using the method of the previous subsection, we can obtain a $C^0$ estimate on
\begin{equation*}
	\hat E_2 = \frac{1}{t}L\hat{\eta}(t) + M(t, \hat\eta) - \hat\eta_D(t),
\end{equation*}
which we denote by $\varepsilon$. Then, we compute
\begin{equation*}
	\varepsilon_0 = \frac{1}{4\mathcal{I}(t_f)C_{nl}(\sqrt{2t_f}+\frac{2}{\sqrt{3}}t_0B)^2}
\end{equation*}
using interval arithmetic. Here, recall that
\begin{gather*}
	\mathcal{I}(t_f)=\exp((2C_l+1)t_f + 2\lambda)\left(\frac{t_f}{t_0}\right)^{2\lambda}, \qquad \lambda = \frac{(h-1)^2}{2h},\\
	B = \frac{19}{8} + \frac{1}{e} + \frac{1}{2}\left(h + \frac{1}{h}\right), \qquad t_0=\left(\frac{\sqrt{3}}{2} \frac{e^{-(C_l+1/2)t_f-\lambda}}{t_f^\lambda\sqrt{\lambda} BC_l}\right)^{1/(1-\lambda)}.
\end{gather*}
If $\varepsilon<\varepsilon_0$, then hypothesis (\ref{hyp4}) of Lemma \ref{existencelemma} holds. The lemma consequently stipulates the existence of a real solution $\mu$ up to time $t_f$ with bounds (\ref{mutfbounds}).

\subsection{Extension to infinity}\label{extensionsubsection}
The procedure in this subsection follows \S\ref{infinitysection}. Given a choice of constants $A, B, C, D$, we can use the bound (\ref{mutfbounds}) to give a rigorous estimate of $\abs{Z_0}^2$ from above, and consequently estimate from above the quantity $\zeta_m$ of \S\ref{CDsubsection} using interval arithmetic. We can also get upper estimates for $K_0$ from the bound (\ref{mutfbounds}).

More precisely, consider the solution $\eta$ produced by Lemma \ref{existencelemma} at the final time $t_f$. It is given by
\begin{equation*}
	\eta(t_f) = \hat\eta(t_f) + \mu(t_f),
\end{equation*}
where the bound (\ref{mutfbounds}) on $\mu(t_f)$ can be computationally absorbed into the interval-arithmetic error of $\eta(t_f)$. We can then compute $\abs{Z_0} = \sqrt{Z_1(s_0)^2 + Z_2(s_0)^2 + Z_3(s_0)^2}$, where
\begin{equation}\label{Zs0expression}
	\begin{split}
		Z_1(s_0) &= 1 + \frac{1}{\rho_0}\left(r_0 - \frac{1}{t_f} - \eta_4(t_f)\right)\\
		Z_2(s_0) &= 1 + \frac{1}{\rho_0}\left(r_0 - \frac{b_1}{h} - \eta_5(t_f)\right)\\
		Z_3(s_0) &= 1 + \frac{1}{\rho_0}\left(r_0 + \frac{b_1}{h} - \eta_6(t_f)\right),
	\end{split}
\end{equation}
and $r_0 = \tanh(t_f/2)$, $\rho_0 = \frac{1}{2}(1-r_0^2)$. Similarly, we can obtain rigorous estimates on 
\begin{equation*}
	K_0 = \frac{3}{8\gamma_0^4}\left(\frac{\alpha_0^4}{\beta_0^4} + \frac{\beta_0^4}{\alpha_0^4} \right) + \frac{3}{8\beta_0^4}\left(\frac{\alpha_0^4}{\gamma_0^4} + \frac{\gamma_0^4}{\alpha_0^4}\right) + \frac{3}{8\alpha_0^4}\left(\frac{\beta_0^4}{\gamma_0^4} + \frac{\gamma_0^4}{\beta_0^4} \right),
\end{equation*}
where
\begin{align*}
	\alpha_0 &= \frac{\rho_0}{\frac{1}{2t_f} + \eta_1(t_f)}\\
	\beta_0 &= \frac{\rho_0}{\frac{1}{h} - \frac{b_1t_f}{h^2} + t_f\eta_2(t_f)}\\
	\gamma_0 &= \frac{\rho_0}{\frac{1}{h} + \frac{b_1t_f}{h^2} + t_f\eta_3(t_f)}.
\end{align*}

We now state the main result of our paper.

\begin{thm}\label{maintheorem}
	There exists an open neighbourhood $U\subset\mathbb{R}^2$ containing $(h, b_1) = (1.5, 0.1)$ such that for each $(\tilde h, \tilde{b}_1)\in U$, there is a unique solution $(a, b, c): [0,\infty)\rightarrow [0,\infty)\times (0, \infty)\times (0, \infty)$ of the $SU(2)$-invariant Einstein ODEs (\ref{tangential}), (\ref{conservation}) satisfying the boundary conditions $a(0)=0$, $\dot a(0)=2$, $b(0)=c(0)=\tilde h$ and $\dot b(0) = -\dot c(0) = \tilde{b}_1$. Thus, each $(\tilde h, \tilde{b}_1)\in U$ corresponds to a distinct	$SU(2)$-invariant complete negative Einstein metric $g_{(\tilde h, \tilde{b}_1)}$ on the bundle $\mathcal{O}(-4)$ over $\mathbb{C}P^1$ of the form
	\begin{equation*}
		g_{(\tilde h, \tilde{b}_1)}=dt^2 + a(t)^2 \sigma_1^2 + b(t)^2\sigma_2^2 + c(t)^2\sigma_3^2,
	\end{equation*}
	for $t\in [0, \infty)$. Each of these metrics is conformally compact and neither Kähler nor anti-self-dual.
\end{thm}
\begin{proof}
	Recall that so far we have used Lemma \ref{existencelemma} to construct an Einstein metric on the region $\{0\leq t< t_f\}$ around the singular orbit, with very good control on its value $\eta(t_f)$ at the ``endpoint'' $\{t=t_f\}$. This yields, after a coordinate and variables change, good control on $Z(s_0)$ via (\ref{Zs0expression}), as well as an estimate on $K_0$.
	
	By trial and error, following the heuristics outlined in the remark after the statement of Lemma \ref{infinitylemma}, we can choose $A, B, C, D$ such that the hypotheses of said lemma hold. Consequently, we obtain a solution $\eta(t)$ of the Einstein equation on $t\in [0,\infty)$ with the boundary parameters $(h, b_1) = (1.5, 0.1)$. 
	
	While we can deduce from the fact that it does not appear in the known classifications that $g_{(h, b_1)}$ is not Kähler nor anti-self-dual, it is also possible to verify this directly as follows. For each $t\in (0, t_f)$, we have the tight bound (\ref{mutfbounds}) on $|\mu(t)|$, and consequently good control on the functions $a(t), b(t), c(t)$ corresponding to $g_{(h, b_1)}$. One can explicitly check, using interval arithmetic, that the equations of Theorem 4.1 of \cite{DS94} are not satisfied, and consequently the metric is not Kähler. Finally, one checks that the metric is not anti-self-dual in a similar manner with equation (30) of \cite[\S 3]{CGLP03}. By the well-posedness of the initial value problem (\ref{etaODEslope2}) and the continuous nature of the estimates of \S\ref{infinitysection}, there must consequently be an open neighbourhood $U$ of $(h,b_1)$ in bijection with (non-Kähler, non-anti-self-dual) Einstein metrics.
	
	Finally, it remains to show that different points of $U$ correspond to distinct, non-isometric metrics. First, we note that diffeomorphisms of $M$ which are equivariant under the cohomogeneity one action (say of the Lie group $G$) must send singular orbits to singular orbits and principal orbits to principal orbits. Hence, if two metrics corresponding to two points in $U$ were isometric via a $G$-equivariant diffeomorphism, they must have isometric bolts, and consequently the same bolt size $\tilde h$. In order for principal orbits to be isometric in a neighbourhood of the bolt, they must also have the same value of $\tilde b_1$. 
	
	If two metrics $g_1$ and $g_2$ were isometric via a non $G$-equivariant diffeomorphism, then the isometric $G$-action on (say) $g_1$ pulls back to a $G$-action via isometries on $g_2$ which is different from the cohomogeneity one action. This implies that the isometry group of $g_2$ is strictly larger than $G$ (i.e. of higher dimension than $SU(2)$), which is not possible.

\end{proof}

\textit{Remark.} The metrics constructed in Theorem \ref{maintheorem} are genuinely triaxial in the sense that the initial data is a nonzero distance away (indeed a distance of $\approx0.1$) from the subset $\{b_1=0\}$ of parameter space corresponding to biaxial metrics. This is not the case for the metrics constructed in \cite[Lemma 4.5]{D25} by perturbing $U(2)$-invariant solutions.

\bigskip

\textbf{Acknowledgements.} I would like to thank Andrew Dancer and Jason Lotay for valuable discussions, guidance and comments on the manuscript. I am also grateful to Wolfgang Ziller for the fruitful discussions from which the initial ideas for this project emerged. Finally, I would like to thank the referees for their detailed feedback and helpful comments.

\appendix
\section{Expressions and estimates for $M_l$ and $M_{nl}$}\label{MlMnlsection}
Recall that $M(t,\eta) = (M_1, \dots, M_6)^T$ is an analytic function of $t$ and $\eta$ (for $\eta$ near $\hat \eta$) given by (\ref{Mformulas}), and that for a true Einstein metric $\eta$, an approximate Einstein metric $\hat\eta$, and $\mu = \eta - \hat\eta$, the quantities $M_l$ and $M_{nl}$ are defined by
\begin{equation*}
	M_l(t, \hat \eta) = \left(\frac{\partial M_i}{\partial \eta_j}(t,\hat\eta)\right)_{ij}, \qquad M_{nl}(t, \hat \eta, \mu) = M(t,\eta) - M(t,\hat\eta) - M_l(t, \hat\eta)\cdot \mu.
\end{equation*}
They form the decomposition 
\begin{equation*}
	M(t, \hat\eta + \mu) - M(t, \hat\eta) = \underbrace{M_l(t, \hat \eta)\cdot \mu}_{\mathcal{O}(\mu)} + \underbrace{M_{nl}(t, \hat\eta, \mu)}_{\mathcal{O}(\mu^2)},
\end{equation*}
which will play an essential role in the analysis of the Einstein equation near $\hat\eta$ (\ref{mutrueequation}). We are interested in obtaining reasonably sharp constants $C_l, C_{nl}>0$ satisfying the estimates (\ref{Cldfn}), (\ref{Cnldfn}). To do so, we compute in this appendix explicit expressions for $M_l$ and $M_{nl}$ from (\ref{Mformulas}).
\subsection{Linear term $M_l$}\label{jacobiansubsection}
We compute for $M_1, M_2, M_3$ that
\begin{gather*}
	\frac{\partial M_1}{\partial \eta_1} = -\hat{\eta}_4, \quad \frac{\partial M_1}{\partial \eta_4} = -\hat{\eta}_1\\
	\frac{\partial M_2}{\partial \eta_2} = -\left(\frac{b_1}{h} + \hat\eta_5\right), \quad \frac{\partial M_2}{\partial \eta_5} = \frac{b_1}{h^2} = \hat\eta_2\\
	\frac{\partial M_3}{\partial \eta_3} = \frac{b_1}{h} - \hat\eta_6, \quad \frac{\partial M_3}{\partial \eta_6} = -\left(\frac{b_1}{h^2} + \hat\eta_3\right)
\end{gather*}
with all other terms zero. Denote with a hat quantities corresponding to the approximate solution $\hat\eta$. For $M_4$, we have
\begin{gather*}
	\frac{\partial M_4}{\partial \eta_1} = -\frac{\hat X_2 \hat X_3}{\hat X_1^3} - \frac{\hat X_3^2\hat X_1}{\hat X_2^2} - \frac{\hat X_1 \hat X_2^2}{\hat X_3^2} + 2\hat X_1\\
	\frac{\partial M_4}{\partial \eta_2} = \left(\frac{\hat X_2\hat X_3^2}{\hat X_1^2} + \frac{\hat X_3^2\hat X_1^2}{\hat X_2^3} - \frac{\hat X_1^2 \hat X_2}{\hat X_3^2}\right)t\\
	\frac{\partial M_4}{\partial \eta_3} = \left(\frac{\hat X_3\hat X_2^2}{\hat X_1^2} + \frac{\hat X_2^2\hat X_1^2}{\hat X_3^3} - \frac{\hat X_1^2 \hat X_3}{\hat X_2^2}\right)t\\
	\frac{\partial M_4}{\partial \eta_4} = -2\hat\eta_4 - \hat\eta_5 - \hat\eta_6, \quad \frac{\partial M_4}{\partial \eta_5} = -\hat\eta_4, \quad \frac{\partial M_4}{\partial \eta_6} = -\hat\eta_4.
\end{gather*}
For $M_5$, we have
\begin{gather*}
	\frac{\partial M_5}{\partial \eta_1} = \frac{\hat X_1 \hat X_3^2}{\hat X_2^2} + \frac{\hat X_3^2 \hat X_2^2}{\hat X_1^3} - \frac{\hat X_2^2 \hat X_1}{\hat X_3^3}\\
	\frac{\partial M_5}{\partial \eta_2} = \left(-\frac{\hat X_1^2 \hat X_3^2}{\hat X_2^3} - \frac{\hat X_3^2\hat X_2}{\hat X_1^2} - \frac{\hat X_2 \hat X_1^2}{\hat X_3^2}+ 2\hat X_2\right)t + \frac{h}{2t}\\
	\frac{\partial M_5}{\partial \eta_3} = \left(\frac{\hat X_3 \hat X_1^2}{\hat X_2^2} + \frac{\hat X_1^2\hat X_2^2}{\hat X_3^3} - \frac{\hat X_2^2 \hat X_3}{\hat X_1^2}\right)t - \frac{h}{2t}\\
	\frac{\partial M_5}{\partial \eta_4} = -\left(\frac{b_1}{h} + \hat \eta_5\right), \quad \frac{\partial M_5}{\partial \eta_5} = -\hat\eta_4 - 2\hat\eta_5 - \hat\eta_6 - \frac{b_1}{h}, \quad \frac{\partial M_5}{\partial \eta_6} = -\left(\frac{b_1}{h} + \hat\eta_5\right).
\end{gather*}
For $M_6$, we have
\begin{gather*}
	\frac{\partial M_6}{\partial \eta_1} = \frac{\hat X_1\hat X_2^2}{\hat X_3^2} + \frac{\hat X_2^2 \hat X_3^2}{\hat X_1^3} - \frac{\hat X_3^2 \hat X_1}{\hat X_2^2}\\
	\frac{\partial M_6}{\partial \eta_2} = \left(\frac{\hat X_2\hat X_1^2}{\hat X_3^2} + \frac{\hat X_1^2\hat X_3^2}{\hat X_2^3} - \frac{\hat X_3^2\hat X_2}{\hat X_1^2}\right)t - \frac{h}{2t}\\
	\frac{\partial M_6}{\partial \eta_3} = \left(-\frac{\hat X_2^2\hat X_1^2}{\hat X_3^3} - \frac{\hat X_1^2\hat X_3}{\hat X_2^2} - \frac{\hat X_3 \hat X_2^2}{\hat X_1^2} + 2\hat X_3\right)t + \frac{h}{2t}\\
	\frac{\partial M_6}{\partial \eta_4} = \frac{b_1}{h} - \hat\eta_6,\quad	\frac{\partial M_6}{\partial \eta_5} = \frac{b_1}{h} - \hat\eta_6,\quad\frac{\partial M_6}{\partial \eta_6} = -\hat\eta_4 - \hat\eta_5 - 2\hat\eta_6 + \frac{b_1}{h}.
\end{gather*}
We may write this as a block matrix
\begin{equation*}
	M_l=\begin{pmatrix*}M_{00} & M_{01}\\
		M_{10} & M_{11}\end{pmatrix*},
\end{equation*}
with 
\begin{gather*}
	M_{00}=\diag\left\{-\hat\eta_4, -\left(\frac{b_1}{h} + \hat\eta_5\right), \frac{b_1}{h} - \hat\eta_6\right\}\\
	M_{01}=\diag\left\{-\hat\eta_1, \frac{b_1}{h^2} - \hat\eta_2, -\left(\frac{b_1}{h^2} + \hat\eta_3\right)\right\}\\
	M_{11}=\begin{pmatrix*}-2\hat\eta_4 - \hat\eta_5 - \hat\eta_6 & -\hat\eta_4 & -\hat\eta_4 \\
		-\left(\frac{b_1}{h} + \hat\eta_5\right) & -\frac{b_1}{h} - \hat\eta_4 - 2\hat\eta_5 - \hat\eta_6 & -\left(\frac{b_1}{h} + \hat\eta_5\right)\\
		\frac{b_1}{h} - \hat\eta_6 & \frac{b_1}{h} - \hat\eta_6 & \frac{b_1}{h} - \hat \eta_4 - \hat \eta_5 - 2\hat\eta_6\end{pmatrix*}.
\end{gather*}
We can loosely estimate
\begin{equation*}
	\norm{M_l}\leq \norm{M_{00}} + \norm{M_{01}} + \norm{M_{10}} + \norm{M_{11}}.
\end{equation*}
The first two terms are simple to estimate, namely
\begin{align*}
	\norm{M_{00}} &= \max\left\{\abs{\hat\eta_4}, \abs{\frac{b_1}{h} + \hat\eta_5}, \abs{\frac{b_1}{h} - \hat\eta_6} \right\}\\
	\norm{M_{01}} &= \max\left\{\abs{\hat\eta_1}, \abs{\frac{b_1}{h^2} - \hat\eta_2}, \abs{\frac{b_1}{h^2} + \hat\eta_3} \right\}.
\end{align*}
Furthermore
\begin{equation*}
	\norm{M_{11}} = -(\hat\eta_4+\hat\eta_5+\hat\eta_6)\mathrm{id} + \begin{pmatrix*}-\hat\eta_4 & -\hat\eta_4 & -\hat\eta_4\\
		-\frac{b_1}{h} - \hat\eta_5 & -\frac{b_1}{h} - \hat\eta_5 & -\frac{b_1}{h} - \hat\eta_5\\
		\frac{b_1}{h} - \hat\eta_6 & \frac{b_1}{h} - \hat\eta_6 & \frac{b_1}{h} - \hat\eta_6 \end{pmatrix*},
\end{equation*}
so that
\begin{equation*}
	\norm{M_{11}} \leq \abs{\hat\eta_4 +\hat\eta_5 + \hat\eta_6} + \sqrt{3}\sqrt{\abs{\hat\eta_4}^2 + \left(\frac{b_1}{h} + \hat\eta_5\right)^2 + \left(\frac{b_1}{h} - \hat\eta_6\right)^2}.
\end{equation*}
We further simplify the components of $M_{10}$, as terms with positive powers of $\hat X_1$ are \textit{a priori} singular. Explicitly smooth expressions are listed below.
\begin{align*}
	\frac{\partial M_4}{\partial \eta_1} &= -\hat X_2 \hat X_3 \frac{t^3}{(\frac{1}{2} + t\hat\eta_1)^3} - t\left(\frac{1}{2} + t\hat\eta_1\right)\frac{(\frac{2b_1}{h^2} + \hat\eta_3 - \hat\eta_2)(\hat X_2 + \hat X_3)^2}{\hat X_2^2 \hat X_3^2}\\
	\frac{\partial M_4}{\partial \eta_2} &= \hat X_2\hat X_3^2 \frac{t^3}{(\frac{1}{2} + t\hat\eta_1)^2}+ \frac{(2\frac{b_1}{h^2} + \hat\eta_3 - \hat\eta_2)(\hat X_3^2 + \hat X_2^2)(\hat X_3+\hat X_2)}{\hat X_2^3\hat X_3^2}\left(\frac{1}{2} + t\hat\eta_1\right)^2\\
	\frac{\partial M_4}{\partial \eta_3} &= \hat X_2^2\hat X_3 \frac{t^3}{(\frac{1}{2} + t\hat\eta_1)^2}+ \frac{-(2\frac{b_1}{h^2} + \hat\eta_3 - \hat\eta_2)(\hat X_3^2 + \hat X_2^2)(\hat X_3+\hat X_2)}{\hat X_2^2\hat X_3^3}\left(\frac{1}{2} + t\hat\eta_1\right)^2
\end{align*}
\begin{align*}
	\frac{\partial M_5}{\partial \eta_1} &= \hat X_3^2\hat X_2^2 \frac{t^3}{(\frac{1}{2}+t\hat\eta_1)^3}+ \left(\frac{1}{2} + t\hat\eta_1\right)\frac{(\hat X_3^2 + \hat X_2^2)(\hat X_3 + \hat X_2)}{\hat X_2^2 \hat X_3^2}\left(\frac{2b_1}{h^2} + \hat\eta_3 - \hat\eta_2 \right)\\
	\frac{\partial M_5}{\partial \eta_2} &= 2\hat X_2 t - \hat X_3^2 \hat X_2 \frac{t^3}{(\frac{1}{2} + t\hat\eta_1)^2} - \hat\eta_1 (1+t\hat\eta_1)\left(\frac{\hat  X_3^2}{\hat X_2^3} + \frac{\hat X_2}{\hat X_3^2}\right)\\
	&\quad +\frac{1}{4\hat X_2^3} \Bigg[\left(-\frac{5b_1}{h^3} + \frac{1}{h}(3\hat\eta_2 - 2\hat\eta_3)\right) + t\left(3\left(\frac{-b_1}{h^2} + \hat\eta_2\right)^2-\left(\frac{b_1}{h^2} + \hat\eta_3\right)^2\right) \\
	&\quad + t^2h\left(-\frac{b_1}{h^2} + \hat\eta_2\right)^3\Bigg] + \frac{1}{4\hat X_3^2} \left[\frac{3b_1}{h^2} - \hat\eta_2 + \hat\eta_3 + th\left(\frac{b_1}{h^2} + \hat\eta_3\right)^2\right]\\
	\frac{\partial M_5}{\partial \eta_3} &= - \hat X_2^2 \hat X_3 \frac{t^3}{(\frac{1}{2} + t\hat\eta_1)^2} + \hat\eta_1 (1+t\hat\eta_1)\left(\frac{\hat  X_2^2}{\hat X_3^3} + \frac{\hat X_3}{\hat X_2^2}\right)\\
	&\quad -\frac{1}{4\hat X_3^3} \Bigg[\left(\frac{5b_1}{h^3} + \frac{1}{h}(3\hat\eta_3 - 2\hat\eta_2)\right) + t\left(3\left(\frac{b_1}{h^2} + \hat\eta_3\right)^2-\left(-\frac{b_1}{h^2} + \hat\eta_2\right)^2\right) \\
	&\quad + t^2h\left(\frac{b_1}{h^2} + \hat\eta_3\right)^3\Bigg] - \frac{1}{4\hat X_2^2} \left[-\frac{3b_1}{h^2} - \hat\eta_3 + \hat\eta_2 + th\left(-\frac{b_1}{h^2} + \hat\eta_2\right)^2\right]
\end{align*}
\begin{align*}
	\frac{\partial M_6}{\partial \eta_1} &= \hat X_3^2\hat X_2^2 \frac{t^3}{(\frac{1}{2}+t\hat\eta_1)^3}- \left(\frac{1}{2} + t\hat\eta_1\right)\frac{(\hat X_3^2 + \hat X_2^2)(\hat X_3 + \hat X_2)}{\hat X_2^2 \hat X_3^2}\left(\frac{2b_1}{h^2} + \hat\eta_3 - \hat\eta_2 \right)\\
	\frac{\partial M_6}{\partial \eta_2} &=	- \hat X_3^2 \hat X_2 \frac{t^3}{(\frac{1}{2} + t\hat\eta_1)^2} + \hat\eta_1 (1+t\hat\eta_1)\left(\frac{\hat  X_3^2}{\hat X_2^3} + \frac{\hat X_2}{\hat X_3^2}\right)\\
	&\quad -\frac{1}{4\hat X_2^3} \Bigg[\left(-\frac{5b_1}{h^3} + \frac{1}{h}(3\hat\eta_2 - 2\hat\eta_3)\right) + t\left(3\left(\frac{-b_1}{h^2} + \hat\eta_2\right)^2-\left(\frac{b_1}{h^2} + \hat\eta_3\right)^2\right) \\
	&\quad + t^2h\left(-\frac{b_1}{h^2} + \hat\eta_2\right)^3\Bigg] - \frac{1}{4\hat X_3^2} \left[\frac{3b_1}{h^2} - \hat\eta_2 + \hat\eta_3 + th\left(\frac{b_1}{h^2} + \hat\eta_3\right)^2\right]\\
	\frac{\partial M_6}{\partial \eta_3} &= 2\hat X_3 t - \hat X_2^2 \hat X_3 \frac{t^3}{(\frac{1}{2} + t\hat\eta_1)^2} - \hat\eta_1 (1+t\hat\eta_1)\left(\frac{\hat  X_2^2}{\hat X_3^3} + \frac{\hat X_3}{\hat X_2^2}\right)\\
	&\quad +\frac{1}{4\hat X_3^3} \Bigg[\left(\frac{5b_1}{h^3} + \frac{1}{h}(3\hat\eta_3 - 2\hat\eta_2)\right) + t\left(3\left(\frac{b_1}{h^2} + \hat\eta_3\right)^2-\left(-\frac{b_1}{h^2} + \hat\eta_2\right)^2\right) \\
	&\quad + t^2h\left(\frac{b_1}{h^2} + \hat\eta_3\right)^3\Bigg] + \frac{1}{4\hat X_2^2} \left[-\frac{3b_1}{h^2} - \hat\eta_3 + \hat\eta_2 + th\left(-\frac{b_1}{h^2} + \hat\eta_2\right)^2\right].
\end{align*}
For simplicity, we estimate the operator norm of $M_{10}$ by its Frobenius norm, namely
\begin{equation*}
	\norm{M_{10}}^2\leq \sum_{i=4}^6\sum_{j=1}^3 \left(\frac{\partial M_i}{\partial\eta_j}\right)^2.
\end{equation*}

\subsection{Nonlinear term $M_{nl}$}
We have
\begin{equation*}
	M_{nl,1}=-\mu_1\mu_4, \quad M_{nl,2}=-\mu_2\mu_5, \quad M_{nl, 3}= -\mu_3\mu_6,
\end{equation*}
as well as
\begin{align*}
	M_{nl,4}&=-\mu_4(\mu_4+\mu_5+\mu_6) + R_{1,nl}\\
	M_{nl,5}&=-\mu_5(\mu_4+\mu_5+\mu_6) + R_{2,nl}\\
	M_{nl,6}&=-\mu_6(\mu_4+\mu_5+\mu_6) + R_{3,nl},
\end{align*}
where $R_{i,nl}$ is the nonlinear part of $R_i(X)-R_i(\hat X)$ considered as a function of $\mu$.

Let $\kappa^{ij}_k$ be the nonlinear part of $\frac{X_i^2 X_j^2}{X_k^2} - \frac{\hat X_i^2 \hat X_j^2}{\hat X_k^2}$. Then
\begin{align*}
	R_{1,nl}&=\frac{1}{2}(\kappa^{23}_1 - \kappa^{31}_2 - \kappa^{21}_3) + \mu_1^2\\
	R_{2,nl}&=\frac{1}{2}(\kappa^{31}_2 - \kappa^{21}_3 - \kappa^{23}_1) + t^2\mu_2^2\\
	R_{3,nl}&=\frac{1}{2}(\kappa^{21}_3 - \kappa^{23}_1 - \kappa^{31}_2) + t^2\mu_3^2.
\end{align*}
An explicit computation yields that
\begin{align*}
	\kappa^{23}_1 &= \left(\frac{1}{\hat X_1^2} - \frac{\mu_1(2\hat X_1 + \mu_1)}{\hat X_1^2 (\hat X_1 +\mu_1)^2}\right)\left(t^2(\hat X_2^2\mu_3^2 + \hat X_3^2\mu_2^2) + (2\hat X_2 t\mu_2 + t^2\mu_2^2)(2\hat X_3t\mu_3 + t^2\mu_2^2)\right) \\
	&\quad - \frac{\mu_1(2\hat X_1+\mu_1)}{\hat X_1^2 (\hat X_1 + \mu_1)^2}\cdot 2t(\hat X_2^2 \hat X_3\mu_3 + \hat X_3^2 \hat X_2 \mu_2)\\
	\kappa^{31}_2 &= \left(\frac{1}{2}+t\hat\eta_1\right)^2\left(\frac{\mu_2^2\hat X_3^2(3\hat X_2 + 2t\mu_2)}{\hat X_2^3(\hat X_2 + t\mu_2)^2} - \frac{4\hat X_3 \mu_2\mu_3}{\hat X_2^2} + \frac{2\hat X_3 t\mu_3\mu_2^2(3\hat X_2 + 2t\mu_2)}{\hat X_2^3 (\hat X_2 + t\mu_2)^2}\right)\\
	&\quad + \frac{2\mu_1(\frac{1}{2} + t\hat\eta_1) + t\mu_1^2}{\hat X_2^2(\hat X_2 + t\mu_2)^2} \left((2\hat X_3 \mu_3 + t\mu_3^2)\hat X_2^2 - (2\hat X_2 \mu_2 + t\mu_2^2)\hat X_3^2\right) + \mu_1^2\frac{\hat X_3^2}{\hat X_2^2}.
\end{align*}
The other relevant coefficient $\kappa^{21}_3$ is obtained by a permutation of the indices $2\leftrightarrow 3$ in the formula for $\kappa^{31}_2$.

\section{Specific values used in the computer-assisted proof}\label{valuesapp}

\begin{table}[hbt!]
	\begin{tabular}{|c|c|c|}
		\hline
		Constant & Value & Requirement\\ \hline
		$h$ & $\frac{3}{2} = 1.5$ & $>\frac{1}{2}\sqrt{\frac{31}{23}}$ and $<2+\sqrt{3}$\\
		$b_1$ & $\frac{1}{10} = 0.1$ & \\
		$t_f$ & 2.25 & \\
		$C_l$ & $< 30.895$ & \\
		$C_{nl}$ & $< 12.620$ & \\
		$\lambda$ & $\frac{1}{12}$ & \\
		$t_0$ & $>9.100 \cdot 10^{-17}$ & \\
		$\mathcal{I}(t_f)$ & $<2.783 \cdot 10^{28}$& \\
		$B$ &$\frac{19}{8} + \frac{1}{e} + \frac{1}{2}\left(h + \frac{1}{h}\right) \approx 3.826$ & \\
		$\varepsilon_0$ &  $>6.461 \cdot 10^{-32}$ & \\ 
		$\varepsilon$ & $<2.709\cdot 10^{-33}$ & $<\varepsilon_0$ \\ \hline

	\end{tabular}
	\caption{Constants and values used in the fixed-point-theorem estimates of \S\ref{fixedpointsection}. The hypotheses mentioned are those of Lemma \ref{existencelemma}. Quantities listed without inequalities are exact.}
\end{table}

\begin{table}[hbt!]
	\begin{tabular}{|c|c|c|}
		\hline
		Constant & Value & Requirement\\ \hline
		$A$ & $0.375$ & \\
		$B$ & $0.43$ & \\
		$C$ & $-2.5$ & \\
		$D$ & $K_0+\delta$ &  for any $\delta>0$ \\
		$2C+\frac{1}{B^2}$ & $\frac{755}{1849}$ & $\geq 0$ from hypothesis (\ref{inf1})\\
		$s_0$ & $1-\tanh(1.125) \approx 0.191$ & \\
		$\abs{Z(s_0)}$ & $<0.199$ & \\
		$\zeta_m$ & $<0.332\, 94$ & $<1/3$ from hypothesis (\ref{inf2})\\
		$4 + C - s_0/(2-s_0) - \sqrt{3}\zeta_m$ & $>0.817$ &  $>0$ from hypothesis (\ref{inf3})\\
		$K_0$ & $<0.594$ & $<D$ from hypothesis (\ref{inf4})\\ \hline
	\end{tabular}
	\caption{Constants and values used in the estimates at infinity of \S\ref{infinitysection}. The hypotheses mentioned are those of Lemma \ref{infinitylemma}. Quantities listed without inequalities are exact.}
\end{table}

In this appendix, we detail for illustrative purposes the specific values for the various constants and estimates obtained from the choice of initial data $h=1.5$, $b_1=0.1$.

The heuristic ODE solver is used at 1000 digit precision, and the numerical Sturm's theorem estimator uses 2000 digit precision. Each power series is evaluated up to order 110, and then each component of the heuristic solution $\tilde\eta$ is approximated by a degree 110 Chebyshev polynomial.

The specific solution we produce at $(h, b_1) = (1.5, 0.1)$ is shown in Figure \ref{metricfig}.

\begin{figure}[hbt!]
	\centering
	\begin{subfigure}[t]{0.48\textwidth}
		\centering
		\includegraphics[width=\linewidth]{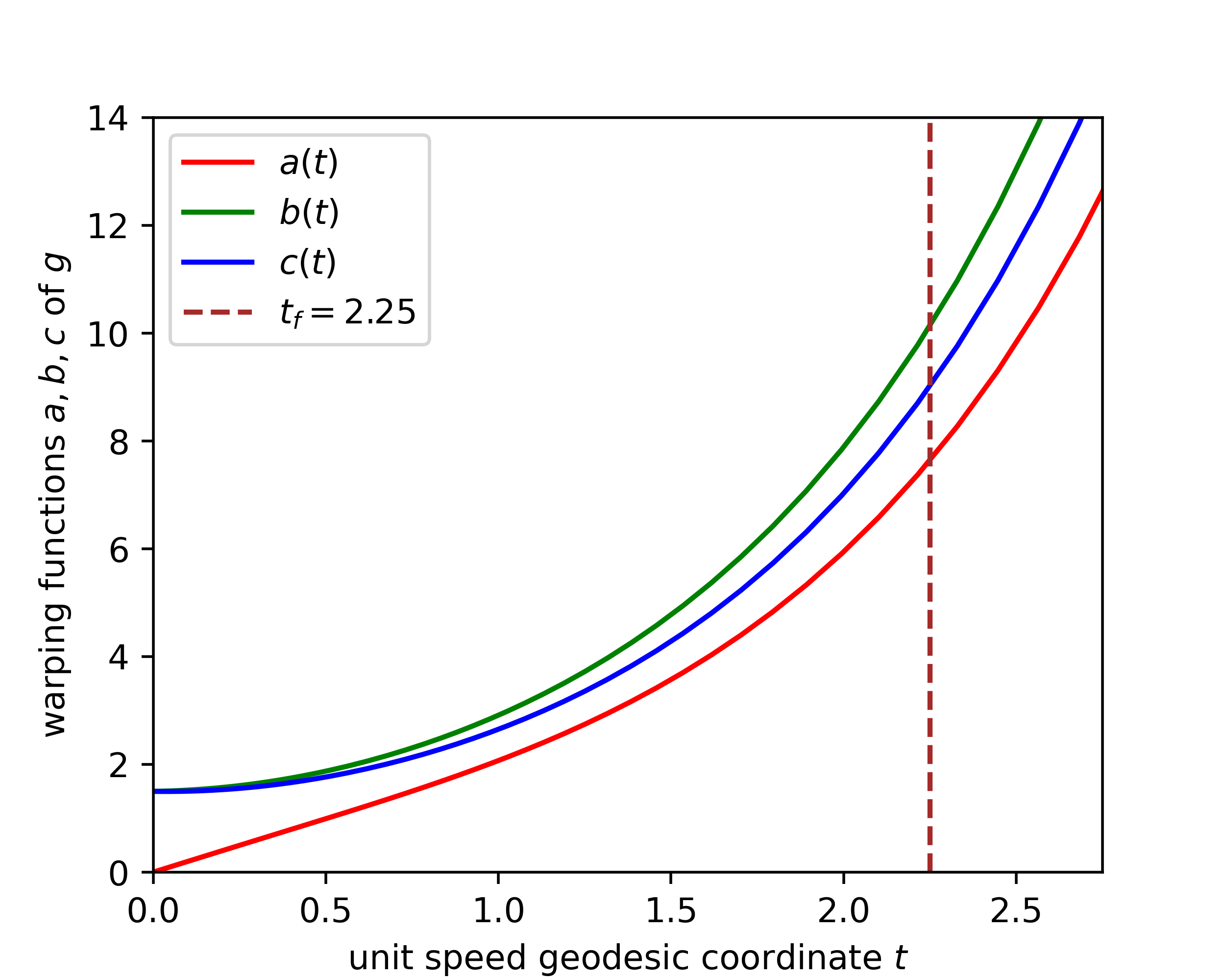} 
		\caption{Warping functions of an Einstein metric $g$ of the form (\ref{b9}) in the geodesic coordinate $t$. Note that $a, b, c \sim e^t$ as $t\to\infty$.}
	\end{subfigure}
	\hfill
	\begin{subfigure}[t]{0.48\textwidth}
		\centering
		\includegraphics[width=\linewidth]{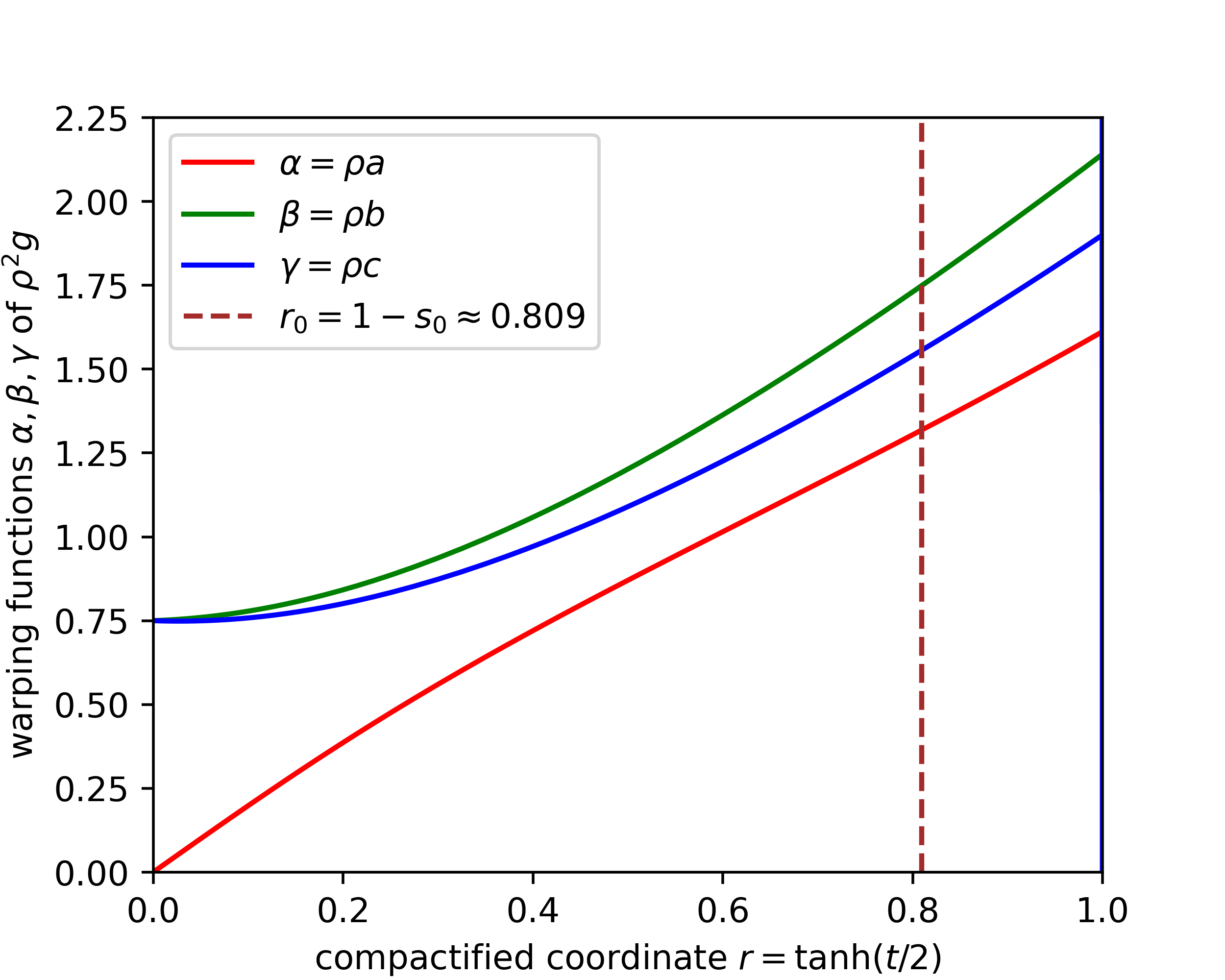} 
		\caption{Warping functions of $\rho^2g$ as a function of the compact coordinate $r\in [0,1]$. Note that $\alpha'(1) = \alpha(1)$ and cyclically.}
	\end{subfigure}
	\caption{Plot of the warping functions of the Einstein metric constructed in \S \ref{computersection}, with $h=1.5$, $b_1=0.1$. The cutoff time where the fixed-point construction of \S\ref{fixedpointsection} meets the asymptotic hyperbolicity estimates of \S\ref{infinitysection} is indicated by the brown dashed line. The numerics are produced using an order 8 Runge--Kutta solver.}\label{metricfig}
\end{figure}

\section{Boundary data and formal power series solutions for each singular orbit topology}\label{boundaryappendix}

In this appendix, we will discuss the initial data to be used by the heuristic numerical solver for various types of singular orbit topologies. From this point on, for simplicity we will denote $b_j=b^{(j)}(0)$ and so on.

\subsection{Initial data for a nut}

Suppose that we have a nut at $t=0$, i.e. $a(0)=b(0)=c(0)=0$, $\dot a(0)=\dot b(0)=\dot c(0) = \frac{1}{2}$ and the singular orbit is a point. By the smoothness conditions, $\ddot a(0)=\ddot b(0)=\ddot c(0)=0$. In fact, even without assuming that the functions are odd, one can show that the equations require the second derivatives at zero to vanish. In order to distinguish $a,b$ and $c$, we thus have to consider the order 3 term. The tangential ODEs are trivial, while the conservation law gives
\begin{equation*}
	a_3+b_3+c_3 = -\frac{\Lambda}{2}.
\end{equation*}
Consequently, the metric near $t=0$ takes the form
\begin{gather*}
	a(t)= \frac{1}{2}t + \frac{a_3}{6}t^3 + \mathcal{O}(t^5), \qquad b(t)= \frac{1}{2}t + \frac{b_3}{6}t^3 + \mathcal{O}(t^5),\\
	c(t)= \frac{1}{2}t - \frac{1}{6}\left(\frac{\Lambda}{2} + a_3 + b_3\right)t^3 + \mathcal{O}(t^5).
\end{gather*}
The free parameters are $a_3$ and $b_3$.

\bigskip

Consider now a bolt at $t=0$, supposing without loss of generality that $a(0)=0$ while $b(0),c(0)\neq 0$. Owing to the symmetries of the equations, we may assume without loss of generality that $a,b,c>0$. All of the constraints and equations also are symmetric under the exchange of $b$ and $c$. Many of the calculations in this section are done with a computer algebra system. 

First, we note that in the case of a bolt, the conservation equation (\ref{conservation}) is entirely redundant, as it only needs to be verified to zeroth order and at zeroth order is equivalent to the sum of the $\ddot b$ and $\ddot c$ equations.

Taking limits $t\to 0$ on both sides of the equation (\ref{tangentialconformal}) for $\ddot a$, we get that $b(0)=c(0)$ and that $\ddot a = 0$. Denote 
\begin{equation*}
	h\equiv b(0)=c(0).
\end{equation*}

In fact, the smoothness conditions require that $a$ be odd and $b^2,c^2$ be even, although it suffices to verify this up to second order. For $b,c$, we note that the equation for $\ddot a =0 $ gives that $b_1+c_1=0$, while the right-hand side of the equation for $\ddot b$ (or indeed for $\ddot c$) tends as $t\to 0$ to $b_1\left(\frac{1}{a_1^2}-4\right)$. Therefore $b_1=c_1=0$, unless $a_1=2$.

\subsection{Initial data for an $\dot a(0)=2$ bolt (total space $\mathcal{O}(-4)$)}\label{slope2data}

Assume that $a_1=2$. In this case, we have the free parameters $h$ and $b_1$, and the formal power series solution
\begin{gather*}
	a(t)= 2t + \mathcal{O}(t^3), \quad b(t) =  h + b_1t + \mathcal{O}(t^2), \quad c(t)= h-b_1t + \mathcal{O}(t^2).
\end{gather*}
The ranges of the parameters are $h\in(0,\infty)$ and $b_1\in[0,\infty)$.

\bigskip

Assume henceforth that $a_1\neq 2$. This results in $b_1=c_1=0$ and subsequently the $\ddot b=0$ equation gives that
\begin{equation*}
	\left(2-\frac{1}{\dot a(0)^2}\right)\ddot b(0) + \frac{1}{\dot a(0)^2}\ddot c(0)=\frac{1}{h}-\Lambda h
\end{equation*}
and the $\ddot c = 0$ equation gives the same thing with $b$ and $c$ swapped. The two above conditions together imply that for all $a_1\neq 1$,
\begin{equation*}
	b_2=c_2=\frac{1}{2}\left(\frac{1}{h}-\Lambda h\right).
\end{equation*}

\subsection{Initial data for a $\dot a(0)=1$ bolt (total space $\mathcal{O}(-2)$)}

When $a_1=1$, we just have the constraint $b_2+c_2=\frac{1}{h} -\Lambda h$. Therefore the free parameters in question are $h$ and $b_2$, and the initial data is
\begin{gather*}
	a(t)= t + \mathcal{O}(t^3), \qquad b(t)= h + \frac{b_2}{2}t^2 + \mathcal{O}(t^3)\\
	c(t)= h + \left(\frac{1}{h} - \Lambda h - b_2\right) \frac{t^2}{2} + \mathcal{O}(t^3).
\end{gather*}

\bigskip

Assume from now on that $a_1\neq 2, 1$.

We now consider first order constraints, i.e. evaluating the first time derivative of the tangential equations at zero. The derivative of the $\ddot a$ equation yields $a_3=-a_1/h^2$. The first derivative of the $\ddot b$ (and therefore $\ddot c$) equation holds trivially for all values of the parameters.

Now we move to second order constraints, given $a_3=-a_1/h^2$. The second derivative of the $\ddot a=0$ equation is trivially true, and the second derivative of the $\ddot b=0$ and $\ddot c=0$ equations respectively give us
\begin{align*}
	\left(8-\frac{1}{a_1^2}\right)b_4 + \frac{1}{a_1^2}c_4 &= -\frac{6a_1^2+4}{h^3} + \frac{4\Lambda}{h}\\
	\frac{1}{a_1^2}b_4 + \left(8-\frac{1}{a_1^2}\right)c_4 &= -\frac{6a_1^2+4}{h^3} + \frac{4\Lambda}{h}.
\end{align*}
We see that $b_4=c_4=-\frac{3}{4}\frac{a_1^2}{h^3} - \frac{1}{2h^3} + \frac{\Lambda}{2h}$ if and only if $a_1\neq 1/2$. 

\subsection{Initial data for a $\dot a(0)=1/2$ bolt (total space $\mathcal{O}(-1)$)}
If $a_1=1/2$, then we only have the constraint
\begin{equation*}
	b_4+c_4=-\frac{3}{2}\frac{a_1^2}{h^3} - \frac{1}{h^3} + \frac{\Lambda}{h} = -\frac{11}{8h^3} + \frac{\Lambda}{h}.
\end{equation*}
Correspondingly, the solution takes the form
\begin{gather*}
	a(t)= \frac{t}{2} - \frac{t^3}{6h^2} + \mathcal{O}(t^5), \quad b(t)= h + \left(\frac{1}{h} - \Lambda h\right)\frac{t^2}{4} + \frac{b_4}{24}t^4 + \mathcal{O}(t^5),\\
	c(t)= h + \left(\frac{1}{h} - \Lambda h\right)\frac{t^2}{4} + \frac{1}{24}\left(-\frac{11}{8h^3}+\frac{\Lambda}{h}-b_4\right)t^4 + \mathcal{O}(t^5).
\end{gather*}

\bigskip

\textbf{Statements and declarations.} No data was collected or used in this work. The author declares no competing interests.

\bibliography{refs}

\providecommand{\bysame}{\leavevmode\hbox to3em{\hrulefill}\thinspace}
\providecommand{\MR}{\relax\ifhmode\unskip\space\fi MR }
\providecommand{\MRhref}[2]{%
  \href{http://www.ams.org/mathscinet-getitem?mr=#1}{#2}
}
\providecommand{\href}[2]{#2}
\begin{thebibliography}{10}

\bibitem{AH88}
Michael Atiyah and Nigel~J. Hitchin, \emph{The geometry and dynamics of
  magnetic monopoles}, M. B. Porter Lectures, Princeton University Press,
  Princeton, NJ, 1988.

\bibitem{BB82}
Lionel B\'erard~Bergery, \emph{Sur de nouvelles vari\'et\'es riemanniennes
  d'{E}instein}, Institut \'Elie {C}artan, 6, Inst. \'Elie Cartan, vol.~6,
  Univ. Nancy, Nancy, 1982, pp.~1--60.

\bibitem{B87}
Arthur~L. Besse, \emph{Einstein manifolds}, Ergebnisse der Mathematik und ihrer
  Grenzgebiete (3) [Results in Mathematics and Related Areas (3)], vol.~10,
  Springer-Verlag, Berlin, 1987.

\bibitem{B98}
Christoph B\"ohm, \emph{Inhomogeneous {E}instein metrics on low-dimensional
  spheres and other low-dimensional spaces}, Invent. Math. \textbf{134} (1998),
  no.~1, 145--176.

\bibitem{BH26}
Timothy Buttsworth and Liam Hodgkinson, \emph{Computationally assisted proof of
  a novel {$\mathsf{O}(3)\times\mathsf{O}(10)$}-invariant {E}instein metric on
  {$S^{12}$}}, J. Lond. Math. Soc. (2) \textbf{113} (2026), no.~2, Paper No.
  e70477, 44.

\bibitem{CG22}
Sun-Yung~A. Chang and Yuxin Ge, \emph{On conformally compact {E}instein
  manifolds}, Rev. Un. Mat. Argentina \textbf{64} (2022), no.~1, 199--213.

\bibitem{CGLP03}
Mirjam Cveti\v{c}, Gary~W. Gibbons, Hong L\"u, and Christopher~N. Pope,
  \emph{Bianchi {IX} self-dual {E}instein metrics and singular {$G_2$}
  manifolds}, Classical Quantum Gravity \textbf{20} (2003), no.~19, 4239--4268.

\bibitem{D09}
Brandon Dammerman, \emph{Diagonalizing cohomogeneity-one {E}instein metrics},
  J. Geom. Phys. \textbf{59} (2009), no.~9, 1271--1284.

\bibitem{DS94}
Andrew~S. Dancer and Ian A.~B. Strachan, \emph{K\"ahler-{E}instein metrics with
  {${\rm SU}(2)$} action}, Math. Proc. Cambridge Philos. Soc. \textbf{115}
  (1994), no.~3, 513--525.

\bibitem{DW00}
Andrew~S. Dancer and McKenzie~Y. Wang, \emph{The cohomogeneity one {E}instein
  equations from the {H}amiltonian viewpoint}, J. Reine Angew. Math.
  \textbf{524} (2000), 97--128.

\bibitem{D25}
Patrick Donovan, \emph{Cohomogeneity one 4-dimensional gradient {R}icci
  solitons}, J. Geom. Anal. \textbf{35} (2025), no.~12, Paper No. 401, 39.

\bibitem{EH79}
Tohru Eguchi and Andrew~J. Hanson, \emph{Self-dual solutions to {E}uclidean
  gravity}, Ann. Physics \textbf{120} (1979), no.~1, 82--106.

\bibitem{EW00}
Jost-Hinrich Eschenburg and McKenzie~Y. Wang, \emph{The initial value problem
  for cohomogeneity one {E}instein metrics}, J. Geom. Anal. \textbf{10} (2000),
  no.~1, 109--137.

\bibitem{FH17}
Lorenzo Foscolo and Mark Haskins, \emph{New {$G_2$}-holonomy cones and exotic
  nearly {K}\"{a}hler structures on {$S^6$} and {$S^3\times S^3$}}, Ann. of
  Math. (2) \textbf{185} (2017), no.~1, 59--130.

\bibitem{GP79}
Gary~W. Gibbons and Christopher~N. Pope, \emph{The positive action conjecture
  and asymptotically {E}uclidean metrics in quantum gravity}, Comm. Math. Phys.
  \textbf{66} (1979), no.~3, 267--290.

\bibitem{GL91}
C.~Robin Graham and John~M. Lee, \emph{Einstein metrics with prescribed
  conformal infinity on the ball}, Adv. Math. \textbf{87} (1991), no.~2,
  186--225.

\bibitem{H95}
Nigel~J. Hitchin, \emph{Twistor spaces, {E}instein metrics and isomonodromic
  deformations}, J. Differential Geom. \textbf{42} (1995), no.~1, 30--112.

\bibitem{J69}
Gary~R. Jensen, \emph{Homogeneous {E}instein spaces of dimension four}, J.
  Differential Geometry \textbf{3} (1969), 309--349.

\bibitem{J17}
Fredrik Johansson, \emph{Arb: efficient arbitrary-precision midpoint-radius
  interval arithmetic}, IEEE Trans. Comput. \textbf{66} (2017), no.~8,
  1281--1292.

\bibitem{MH03}
John~C. Mason and David~C. Handscomb, \emph{Chebyshev polynomials}, Chapman \&
  Hall/CRC, Boca Raton, FL, 2003.

\bibitem{NTU63}
Ezra~T. Newman, Louis~A. Tamburino, and Theodore W.~J. Unti, \emph{Empty-space
  generalization of the {S}chwarzschild metric}, J. Mathematical Phys.
  \textbf{4} (1963), 915--923.

\bibitem{NW25}
Jan Nienhaus and Matthias Wink, \emph{Einstein metrics on the ten-sphere}, J.
  Eur. Math. Soc. (2025).

\bibitem{P78}
Don~N. Page, \emph{A compact rotating gravitational instanton}, Physics Letters
  B \textbf{79} (1978), no.~3, 235--238.

\bibitem{P78a}
\bysame, \emph{Taub-{NUT} instanton with an horizon}, Physics Letters B
  \textbf{78} (1978), no.~2, 249--251.

\bibitem{P85}
Henrik Pedersen, \emph{Eguchi-{H}anson metrics with cosmological constant},
  Classical Quantum Gravity \textbf{2} (1985), no.~4, 579--587.

\bibitem{VZ24}
Luigi Verdiani and Wolfgang Ziller, \emph{Initial value problems on
  cohomogeneity one manifolds, {I}}, \url{https://arxiv.org/abs/2412.06058},
  2024.

\bibitem{WZ86}
McKenzie~Y. Wang and Wolfgang Ziller, \emph{Existence and nonexistence of
  homogeneous {E}instein metrics}, Invent. Math. \textbf{84} (1986), no.~1,
  177--194.

\end{thebibliography}
\bibliographystyle{amsplain}

\end{document}